\DeclareMathOperator{\hgt}{ht}
\DeclareMathOperator{\Sat}{Sat}
\newcommand{\stacks}[1]{\cite[\href{http://stacks.math.columbia.edu/tag/#1}{Tag~#1}]{stacks}}
\title{Relative big polynomial rings}
\author{Andrew Snowden}
\address{Department of Mathematics, University of Michigan, Ann Arbor, MI}
\email{\href{mailto:asnowden@umich.edu}{asnowden@umich.edu}}
\urladdr{\url{http://www-personal.umich.edu/~asnowden/}}
\thanks{The author was supported by NSF DMS-1453893.}
\date{\today}
\begin{document}

\begin{abstract}
Let $K$ be the field of Laurent series with complex coefficients, let $\cR$ be the inverse limit of the standard-graded polynomial rings $K[x_1, \ldots, x_n]$, and let $\cR^{\flat}$ be the subring of $\cR$ consisting of elements with bounded denominators. In previous joint work with Erman and Sam, we showed that $\cR$ and $\cR^{\flat}$ (and many similarly defined rings) are abstractly polynomial rings, and used this to give new proofs of Stillman's conjecture. In this paper, we prove the complementary result that $\cR$ is a polynomial algebra over $\cR^{\flat}$.
\end{abstract}

\maketitle
\tableofcontents

\section{Introduction}

\subsection{Statement of results}

Let $A=\bC\lbb t \rbb$ be the ring of power series over the complex numbers and let $K=\Frac(A)$ be the field of Laurent series. The following rings are the main players in this paper:
\begin{itemize}
\item  Let $\cR$ be the inverse limit of the standard-graded polynomial rings $K[x_1, \ldots, x_n]$ in the category of graded rings. Thus $\cR$ is a graded ring, and a degree $d$ element of $\cR$ is a formal $K$-linear combination of degree $d$ monomials in the variables $\{x_i\}_{i \ge 1}$.
\item Let $\cR^+$ be the subring of $\cR$ with coefficients in $A$.
\item Let $\cR^0$ be the subring of $\cR$ with coefficients in $\bC$.
\item Let $\cR^{\flat}$ be the subring of $\cR$ where the coefficients have bounded denominators, i.e., $f \in \cR^{\flat}$ if and only if there is some $n$ such that $t^n f \in \cR^+$. Thus $\cR^{\flat}=\cR^+[1/t]$.
\end{itemize}
In \cite{stillman}, we showed that both $\cR$ and $\cR^{\flat}$ are abstractly polynomial algebras over $K$, and used these results to give new proofs of Stillman's conjecture. Given these results, it is natural to ask if $\cR$ is isomorphic to a polynomial algebra over its subring $\cR^{\flat}$. Our main result is that this is indeed the case:

\begin{theorem} \label{mainthm}
The ring $\cR$ is a polynomial algebra over $\cR^{\flat}$. More precisely, the map
\begin{equation} \label{eq:thm}
\cR^{\flat}_+/(\cR^{\flat}_+)^2 \to \cR_+/\cR_+^2
\end{equation}
is injective. Suppose that $\{\xi_i\}_{i \in I}$ are homogeneous elements of $\cR_+$ whose images form a basis of $\cR_+/(\cR^{\flat}_+ + \cR^2_+)$. Then the $\cR^{\flat}$-algebra homomorphism $\cR^{\flat}[X_i]_{i \in I} \to \cR$ mapping $X_i$ to $\xi_i$ is an isomorphism of graded $\cR^{\flat}$-algebras.
\end{theorem}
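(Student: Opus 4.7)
I would split the theorem into the injectivity of \eqref{eq:thm} and the resulting polynomial isomorphism, with the latter deduced from the former plus the structural result of \cite{stillman}. For the reduction: assume \eqref{eq:thm} is injective, and choose homogeneous lifts $z_k \in \cR^{\flat}_+$ of a $K$-basis of $\cR^{\flat}_+/(\cR^{\flat}_+)^2$. By the injectivity hypothesis, the classes $\overline{z_k}$ remain linearly independent in $\cR_+/\cR_+^2$, and together with the $\overline{\xi_i}$ they form a $K$-basis of $\cR_+/\cR_+^2$. The polynomial structure result of \cite{stillman}, applied to $\cR^{\flat}$, then yields $\cR^{\flat} = K[z_k]$ as graded polynomial $K$-algebras, while applied to $\cR$ it yields $\cR = K[z_k, \xi_i]$. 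Combining the two presentations gives $\cR = \cR^{\flat}[\xi_i]$ as a graded polynomial $\cR^{\flat}$-algebra, so that the map $\cR^{\flat}[X_i] \to \cR$ is an isomorphism.

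For the injectivity itself, given $f \in \cR^{\flat}_+$ homogeneous with $f \in \cR_+^2$, the goal is $f \in (\cR^{\flat}_+)^2$. After multiplying by a power of $t$, it suffices to establish the stronger statement $\cR^+_+ \cap \cR_+^2 \subseteq (\cR^+_+)^2$ in each positive degree; inverting $t$ then recovers the desired injectivity since $(\cR^{\flat}_+)^2$ is closed under multiplication by elements of $K = \cR^{\flat}_0$. My approach would exploit the $t$-adic filtration on $\cR^+$: the ring $\cR^+$ is $t$-adically complete with reduction $\cR^+/t\cR^+ = \cR^0$, and $\cR^0$ is itself a polynomial $\bC$-algebra by \cite{stillman}. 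One can attempt to lift a factorization of $\bar f$ in $(\cR^0_+)^2$ iteratively modulo higher powers of $t$ and take the $t$-adic limit in $\cR^+$.

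The main obstacle is that a factorization $f = \sum g_i h_i$ witnessing $f \in \cR_+^2$ typically uses factors $g_i, h_i \in \cR_+$ whose coefficients have unbounded $t$-adic denominators, with substantial cancellation required to produce the $A$-coefficient element $f$. Replacing such a factorization with one using factors from $\cR^+_+$ cannot be done by elementary truncation, and appears to require additional structural input, most plausibly by upgrading the polynomiality of $\cR^0$ over $\bC$ to an analogous polynomial $A$-algebra structure on $\cR^+$ itself (via $t$-adic lifting of a polynomial generating set of $\cR^0$), and then using this to constrain the factorization structure of $\cR$.
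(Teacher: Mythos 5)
Your reduction of the polynomial-algebra statement to the injectivity of \eqref{eq:thm} is correct and is exactly how the paper views the theorem: lifting a basis of $\cR^{\flat}_+/(\cR^{\flat}_+)^2$, using injectivity to see that these lifts together with the $\xi_i$ give a basis of $\cR_+/\cR_+^2$, and invoking the polynomiality criterion of \cite{stillman} for both rings. The paper likewise treats this as routine and declares the injectivity ``the only non-trivial part.''

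The problem is that you do not prove the injectivity; your sketch stalls precisely at the hard point and you say so yourself. Worse, the structural input you propose to close the gap --- upgrading the polynomiality of $\cR^0$ over $\bC$ to a polynomial $A$-algebra structure on $\cR^+$ by $t$-adic lifting of generators --- is impossible: as the paper observes in \S 1.2, if $\cR^+$ were a polynomial $A$-algebra its graded pieces would be free $A$-modules, but they are infinite products of copies of $A=\bC\lbb t\rbb$, which are not free. So the naive ``lift a factorization of $\bar f$ from $\cR^0$ modulo successive powers of $t$'' strategy has no foundation to lift against. The paper's actual route is quite different and avoids manipulating factorizations directly: by Euler's identity (Proposition~\ref{prop:str}), finite strength of $f$ is equivalent to the ideal $I$ of partial derivatives being contained in a finitely generated non-unital ideal; finite strength in $\cR$ then gives $\hgt_{\cR}(I\cR)<\infty$, the height comparison of Proposition~\ref{prop:htbd1} transfers this to $\cR^{\flat}$, producing a finitely generated ideal $J\subset\cR^+$ with $I\subset\Sat(J)$; and the main technical result (Theorem~\ref{thm:sat}, resting on the Nakayama-like Proposition~\ref{prop:nak}) shows $\Sat_{\le d-1}(J)$ is finitely generated, which feeds back into Proposition~\ref{prop:str} to give finite strength in $\cR^+$. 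None of this machinery --- the passage to partial derivatives, the height comparison, or the saturation theorem --- appears in your proposal, and without some substitute for it the injectivity remains unproved.
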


Recall from \cite{ananyan-hochster} that a homogeneous element $f$ of a graded ring $R$ has \emph{strength} $\le n$ if there is an expression $f = \sum_{i=1}^n g_i h_i$ where the $g_i$ and $h_i$ are homogeneous elements of positive degree. If no such expression exists, we say that $f$ has strength $\infty$. The ideal $R_+^2$ is exactly the ideal of finite strength elements. Thus the injectivity of \eqref{eq:thm} (which is the only non-trivial part of the theorem) is equivalent to the following, which is what we actually prove:

\begin{theorem} \label{mainthm2}
Let $f$ be an element of $\cR^{\flat}$ that has finite strength in $\cR$. Then $f$ has finite strength in $\cR^{\flat}$.
\end{theorem}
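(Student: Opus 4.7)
The plan is to combine a $t$-adic valuation analysis with the polynomial ring structure of $\cR$ and $\cR^\flat$ established in \cite{stillman}. As a first reduction, since $t$ is a unit in both $\cR$ and $\cR^\flat$, I may multiply $f$ by a suitable power of $t$ and assume $f \in \cR^+$; this preserves both hypothesis and conclusion, because strength over either ring is insensitive to scaling by a unit.

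For the base case of strength one, suppose $f = gh$ in $\cR$ with $g, h \in \cR_+$ nonzero and $f \in \cR^\flat$. The goal is to show $g, h \in \cR^\flat$ directly. The coefficient of each monomial $x^\gamma$ in $gh$ is a finite convolution of the coefficients of $g$ and $h$ in $K = \bC\lpp t \rpp$. If $g \notin \cR^\flat$, its coefficients $c_\alpha(g)$ have $t$-adic valuations tending to $-\infty$, and I would argue that this unboundedness propagates into $gh$ unless $h$ is subject to cancellation conditions so restrictive that they force $h = 0$. Concretely, selecting $\gamma = \alpha + \beta$ for fixed $\beta$ in the support of $h$ and varying $\alpha$ along a sequence with $v(c_\alpha) \to -\infty$, one tracks how the competing terms $c_{\alpha'}(g) d_{\beta'}(h)$ with $\alpha' + \beta' = \gamma$ must cancel; a careful bookkeeping of leading terms should rule out such cancellation occurring for all $\gamma$ simultaneously.

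For general strength $r$, the plan is to induct on $r$ and use the strength-one analysis as a building block, exploiting the gauge freedom in a strength-$r$ expression $f = \sum_{i=1}^r g_i h_i$ (i.e., modifying the $(g_i, h_i)$ without changing the sum) to peel off a factor already lying in $\cR^\flat$ and invoke the inductive hypothesis on the remainder. An alternative and probably cleaner route is to invoke the small subalgebra theorem of \cite{ananyan-hochster}: $f$ lies in a polynomial sub-algebra $B \subset \cR$ of $K$-transcendence degree bounded in terms of $r$ and $\deg f$, containing a strength-$r$ witness, and if one can arrange $B \subset \cR^\flat$, then finite strength in $\cR^\flat$ follows automatically.

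The principal obstacle I anticipate is the interaction of cancellations across $r \geq 2$ terms: the combinatorics of possible $t$-adic cancellations is much richer than in the strength-one case, and it is not obvious how to rearrange a general strength-$r$ expression in $\cR$ into one lying entirely in $\cR^\flat$ by elementary gauge transformations. I expect the heart of the proof to be a \emph{denominator-aware} refinement of the small-subalgebra theorem, asserting that for any $f \in \cR^\flat$ of finite strength in $\cR$, the witnessing polynomial sub-algebra may be chosen inside $\cR^\flat$. Producing this refinement is where I would concentrate the technical work.
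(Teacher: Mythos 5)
Your proposal is a plan rather than a proof, and the plan defers precisely the hard part. The opening reduction (scale by a power of $t$ so that $f \in \cR^+$) matches the paper. But the strength-one analysis is only a heuristic (``a careful bookkeeping of leading terms should rule out such cancellation''), and for general $r$ you explicitly rest the argument on a ``denominator-aware refinement of the small-subalgebra theorem'': that for $f \in \cR^{\flat}$ of finite strength in $\cR$, a witnessing subalgebra can be chosen inside $\cR^{\flat}$. That refinement is not a lemma you can quote --- it is essentially a restatement of Theorem~\ref{mainthm2} itself (a strength-$r$ expression for $f$ with factors in $\cR^{\flat}$ is exactly such a witness), so the proposal is circular at its core. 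The ``gauge transformation'' induction for $r \ge 2$ is likewise only named, not carried out, and you correctly identify that the combinatorics of $t$-adic cancellation across several products is where a direct multiplicative attack becomes unmanageable; nothing in the proposal overcomes this.

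The paper avoids the multiplicative analysis entirely by linearizing the problem: by Proposition~\ref{prop:str} (Euler's identity), finite strength of $f$ is equivalent to the ideal $I$ generated by its partial derivatives being contained in a finitely generated non-unital ideal. Finite strength in $\cR$ then says $I\cR$ has finite height ($\cR$ being a polynomial ring), the height comparison of Proposition~\ref{prop:htbd1} transfers this to $\cR^{\flat}$, and one obtains a finitely generated ideal $J \subset \cR^+$ with $I \subset \Sat(J)$. The genuine technical content is Theorem~\ref{thm:sat} (via the Nakayama-like Proposition~\ref{prop:nak}): the degree-$\le d$ part of the $t$-adic saturation of a finitely generated ideal of $\cR^+$ is finitely generated. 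Nothing playing the role of this saturation theorem appears in your proposal, and without a substitute for it the argument cannot be completed along the lines you describe.
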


Here is the idea of the proof. Let $f$ be a given element of $\cR^{\flat}$ that has finite strength in $\cR$. Scaling by a power of $t$, we can assume that $f \in \cR^+$. Let $I$ be the ideal of $\cR^+$ generated by the partial derivatives of $f$. We show that the extension of $I$ to $\cR^{\flat}$ is contained in the extension of some finitely generated ideal $J \subset \cR^+$. This follows from elementary arguments involving heights, combined with the polynomiality of $\cR^{\flat}$ and $\cR$. It follows that $I$ is contained in the $t$-adic saturation of $J$. The main technical result of this paper (Theorem~\ref{thm:sat}) shows that such a saturation is (close enough to) finitely generated. From here, another elementary argument shows that $f$ has finite strength in $\cR^+$.

\subsection{Additional results on $\cR^+$}

As mentioned, the rings $\cR$ and $\cR^{\flat}$ are polynomial $K$-algebras. It is therefore easy to prove all sorts of results about heights in these rings. The ring $\cR^+$, on other hand, is \emph{not} a polynomial $A$-algebra: indeed, if it were then its graded pieces would be free $A$-modules, but its graded pieces are infinite products of $A$, which are not free. It is therefore not obvious how heights behave in $\cR^+$.

In the course of this work, we discovered a number of results about heights in $\cR^+$, such as a version of the Hauptidealsatz (Proposition~\ref{prop:hauptideal}) and a form of the catenary property (Proposition~\ref{prop:cat}). Although these results are not needed to prove the main theorem, we have included them in \S \ref{s:Rplus} as they use closely related methods.

\subsection{Motivation}  

There are two sources of motivation for this work. One comes from commutative algebra. As mentioned, our polynomiality results for rings like $\cR$ and $\cR^{\flat}$ were used in \cite{stillman} to give two new proofs of Stillman's conjectures, following the original proof in \cite{ananyan-hochster}. Shortly thereafter, our polynomiality results were used in \cite{draisma-lason-leykin} to give a fourth proof of Stillman's conjecture. In \cite{imperfect}, we strengthened our polynomiality results, which allowed us to strengthen the results of \cite{ananyan-hochster} on small subalgebras. Due to these applications, we believe it is worthwhile to try to better understand the precise nature and extent of the polynomiality phenomena. Theorem~\ref{mainthm} is a step in this direction.

The second source of motivation comes from infinite dimensional algebraic geometry. More precisely, in \cite{polygeom}, we study certain infinite dimensional algebraic varieties equipped with an action of $\GL_{\infty}$, and establish a number of nice properties in this situation (such as an analog of Chevalley's theorem). An important open problem remaining in \cite{polygeom} concerns the precise structure of image closures. Theorem~\ref{mainthm} was proved with this problem in mind. To see the connection, suppose that $F(X_1, \ldots, X_r)$ is a polynomial in $r$ variables that is homogeneous of degree $d$, where $X_i$ has degree $d_i$. Then $F$ defines a function
\begin{displaymath}
F \colon \cR^0_{d_1} \times \cdots \times \cR^0_{d_r} \to \cR^0_d.
\end{displaymath}
The space $\cR^0_d$ is an example of an infinite dimensional variety with $\GL_{\infty}$-action, as it can be identified with the dual of $\Sym^d(\bC^{\infty})$. Suppose $f \in \cR^0_d$. Theorem~\ref{mainthm} implies that if $f$ can be realized in the form $\lim_{t \to 0} F(g_1, \ldots, g_r)$ with $g_i \in \cR_{d_i}$, then it can be realized in this form with $g_i \in \cR^{\flat}_{d_i}$. In other words, if $f$ can be realized as a certain kind of ``wild'' limit in the image of $F$ then it can also be realized by a much nicer ``tame'' kind of limit. We had hoped to use this to resolve the open question in \cite{polygeom}. Unfortunately, it does not appear to be quite enough. However, we believe that Theorem~\ref{mainthm} could still be useful in studying similar problems.

\subsection{Open problems}

Here are some open problems raised by our work:
\begin{itemize}
\item In the setting of \S \ref{s:sat}, is it true that the saturation of a finitely generated ideal is finitely generated?
\item Let $A$ be an integral domain such that $K=\Frac(A)$ is perfect, let $R$ be the inverse limit of the graded rings $K[x_1, \ldots, x_n]$, and let $R^{\flat}$ be the subring where the denominators are bounded (i.e., $f \in R^{\flat}$ if $af$ has coefficients in $A$ for some non-zero $a \in A$). In \cite{stillman}, we showed that $R$ and $R^{\flat}$ are polynomial $K$-algebras. Is $R$ a polynomial algebra over $R^{\flat}$? This paper only addresses the special case where $A=\bC\lbb t \rbb$.
\item As mentioned, $\cR^+$ is not a polynomial ring. However, the results of \S \ref{s:Rplus} show that in some ways it behaves like a polynomial ring. Can this observation be sharpened, or made more precise?
\end{itemize}

\subsection{Outline}

In \S \ref{s:bg}, we give some general background on heights. In \S \ref{s:hgt-compar}, we prove a comparison result for heights in $\cR^{\flat}$ and $\cR$. In \S \ref{s:nak}, we prove a Nakayama-like lemma that will be used in our analysis of saturation. In \S \ref{s:sat}, we prove the main technical result of the paper (Theorem~\ref{thm:sat}) on saturations. Using this, we prove our main theorem in \S \ref{s:mainthm}. Finally, in \S \ref{s:Rplus}, we prove some additional results about $\cR^+$.

\subsection*{Acknowledgments}

We thank Dan Erman for comments on a draft of this paper.

\section{Background on heights} \label{s:bg}

Let $R$ be a ring. Recall that the \emph{height} of a prime ideal $\fp$, denoted $\hgt_R(\fp)$, is the maximal value of $n$ for which there exists a strict chain of primes $\fp_0 \subset \cdots \subset \fp_n=\fp$, or $\infty$ if there exist arbitrarily long such chains. The \emph{height} of an ideal $I$, denoted $\hgt_R(I)$, is defined as the minimum of $\hgt_R(\fp)$ over primes $\fp$ containing $I$; by convention, the height of the unit ideal is infinity. If $I$ is an ideal in a finite variable polynomial ring $R=F[x_1, \ldots, x_n]$, with $F$ a field, then $\hgt_R(\fp)$ is the codimension of the locus $V(I) \subset \bA^n_F$. We note that if $I \subset J$ then $\hgt_R(I) \le \hgt_R(J)$. In what follows, polynomial rings can have infinitely many variables.

\begin{proposition} \label{prop:primefg}
Let $R$ be a polynomial ring over a field. Then any finite height prime ideal is finitely generated.
\end{proposition}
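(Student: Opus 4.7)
My plan is to reduce the question to the Noetherian case by showing that $\fp$ is extended from a prime ideal in a finite-variable polynomial subring. Write $R = F[x_i]_{i \in I}$ and, for each finite subset $J \subset I$, set $R_J = F[x_i]_{i \in J}$ (a Noetherian ring) and $\fp_J = \fp \cap R_J$. The extension $\fp_J R$ is a prime of $R$, because $R/\fp_J R = (R_J/\fp_J)[x_i]_{i \in I \setminus J}$ is a polynomial ring over a domain and hence a domain. Moreover, since every element of $R$ involves only finitely many variables, every element of $\fp$ lies in $\fp_J$ for some finite $J$, so $\fp = \bigcup_J \fp_J R$, a directed union as $J$ ranges over finite subsets of $I$.

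The heart of the argument is to show that the directed family $\{\fp_J R\}$ contains a maximum element. Suppose for contradiction that it does not. Then a standard poset argument (iteratively using non-maximality together with the directedness of the index set) produces an infinite strictly ascending chain $\fp_{J_1} R \subsetneq \fp_{J_2} R \subsetneq \cdots$ inside the family. Each term is a prime of $R$ contained in $\fp$, and at most one of them can equal $\fp$ itself because the chain is strict. Thus we obtain arbitrarily long chains of primes strictly below $\fp$, contradicting the assumption $\hgt_R(\fp) < \infty$.

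Hence the family has a maximum $\fp_{J_*} R$, and since the directed union is $\fp$, this maximum must coincide with $\fp$. As $R_{J_*}$ is Noetherian, the prime $\fp_{J_*}$ is finitely generated, and the same finite generating set generates $\fp = \fp_{J_*} R$ over $R$. The only step requiring genuine care is the combinatorial fact that a directed poset in which no element is maximal admits an infinite strictly ascending chain; everything else is essentially formal, relying only on the Noetherianity of finite-variable polynomial rings and the observation that primes extend to primes along polynomial extensions.
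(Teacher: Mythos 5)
Your argument is correct. Note that the paper gives no proof of its own here---it simply cites \cite{stillman}---and your proof is essentially the standard argument behind that reference: $\fp$ is the directed union of the extended primes $\fp_J R$, and finite height forces this family to stabilize at $\fp$ itself. The one step you flag as needing care is indeed fine: in a directed poset a maximal element is automatically a maximum, so if the family $\{\fp_J R\}$ had no maximum you could recursively choose a strictly increasing sequence of primes contained in $\fp$ (none of which can equal $\fp$, since each is strictly contained in a later term that still lies in $\fp$), yielding arbitrarily long chains below $\fp$ and contradicting $\hgt_R(\fp)<\infty$.
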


\begin{proof}
See \cite[Proposition~3.2]{stillman}
\end{proof}

\begin{proposition} \label{prop:finhtgen}
Let $R$ be a polynomial ring over a field. Then an ideal has finite height if and only if it is contained in a finitely generated non-unital ideal.
\end{proposition}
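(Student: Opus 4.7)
The forward direction is immediate from Proposition~\ref{prop:primefg}: if $\hgt_R(I) < \infty$, pick any prime $\fp \supset I$ with $\hgt_R(\fp) = \hgt_R(I)$; then $\fp$ is finitely generated by the cited proposition and proper (hence non-unital) since it is prime, so $\fp$ itself serves as the desired containing ideal.

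For the converse, it suffices to show that any finitely generated proper ideal $J = (f_1,\ldots,f_n) \subset R$ has finite height, as $\hgt_R(I) \le \hgt_R(J)$ whenever $I \subseteq J$. My plan is to reduce to a finite-variable Noetherian subring. Choose $N$ large enough that $f_1,\ldots,f_n \in S := F[x_1,\ldots,x_N]$, and set $J_0 = (f_1,\ldots,f_n)S$, so $J = J_0 R$. The classical Krull Hauptidealsatz in the Noetherian ring $S$ produces a minimal prime $\fq$ of $J_0$ with $\hgt_S(\fq) \le n$. Writing $R = S[Y]$ for the remaining variables $Y$, the quotient $R/\fq R = (S/\fq)[Y]$ is an integral domain, so $\fp := \fq R$ is a prime of $R$ containing $J$.

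The main technical step is to show $\hgt_R(\fp) \le \hgt_S(\fq)$, which then gives $\hgt_R(I) \le \hgt_R(J) \le \hgt_R(\fp) \le n < \infty$. The idea is that heights in the non-Noetherian ring $R$ can be controlled by passing to sufficiently large finite-variable subrings. Given any strict chain $\fp_0 \subsetneq \cdots \subsetneq \fp_h = \fp$ in $R$, I would pick witnesses $g_i \in \fp_i \setminus \fp_{i-1}$ for $i = 1,\ldots,h$, and then choose $N' \ge N$ large enough that all of $g_1,\ldots,g_h$ lie in $S' := F[x_1,\ldots,x_{N'}]$. The contracted chain $\fp_0 \cap S' \subsetneq \cdots \subsetneq \fp_h \cap S'$ is then strict (witnessed by the $g_i$), and freeness of $R$ as an $S'$-module yields $\fp \cap S' = \fq S'$, so $h \le \hgt_{S'}(\fq S')$. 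The classical identity $\hgt_{S'}(\fq S') = \hgt_S(\fq)$ for polynomial extensions of finitely many variables over a Noetherian ring then bounds $h \le \hgt_S(\fq) \le n$.

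The main obstacle is precisely this height comparison: Krull's Hauptidealsatz does not apply directly in $R$, so one must reduce every finite chain back to a finite-variable subring. Once the approximation argument is in place, both directions reduce to standard facts—Proposition~\ref{prop:primefg} on one side and the Noetherian Hauptidealsatz together with the behavior of primes under finite polynomial extensions on the other.
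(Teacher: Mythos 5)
Your proof is correct and follows essentially the same route as the paper: both directions reduce to Proposition~\ref{prop:primefg} and to the fact that a finitely generated ideal is extended from a Noetherian finite-variable subring, where heights are controlled. The only difference is that the paper simply cites \cite[Proposition~3.3]{stillman} for the invariance of height under extension from a finite-variable subring, whereas you prove the needed inequality $\hgt_R(\fq R) \le \hgt_S(\fq)$ directly by contracting chains to a larger finite-variable subring; that argument is sound.
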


\begin{proof}
Suppose $I$ has finite height. Then, by definition, $I$ is contained in a prime of finite height, which is finitely generated by Proposition~\ref{prop:primefg}.

Now suppose $I$ is contained in a finitely generated non-unital ideal, say $(f_1, \ldots, f_r)$. Each $f_i$ uses only finitely many variables, and so $(f_1, \ldots, f_r)$ is extended from a finite variable subring. Performing such an extension does not change height \cite[Proposition~3.3]{stillman}.
\end{proof}

\begin{proposition} \label{prop:ht-ext}
Let $R$ be a finite variable polynomial ring over a field $F$, let $E/F$ be a field extension, and let $S=E \otimes_F R$. Let $\fp$ be a prime of $S$ and let $\fq$ be its contraction to $R$. Then $\hgt_R(\fq) \le \hgt_S(\fp)$.
\end{proposition}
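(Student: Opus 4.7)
The plan is to combine flatness of $S$ over $R$ with the going-down theorem. Since $E$ is a field extension of $F$, choosing any $F$-basis of $E$ exhibits $E$ as a free $F$-module, and therefore $S = E \otimes_F R$ is free (in particular faithfully flat) over $R$. Flat ring extensions satisfy going-down (\stacks{00HS}): whenever a prime $\fp' \subset S$ lies over a prime $\fq' \subset R$ and $\fq'' \subset \fq'$ is a smaller prime of $R$, there exists $\fp'' \subset \fp'$ contracting to $\fq''$.

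Granting this, the deduction is routine. I would take any strict chain of primes $\fq_0 \subsetneq \fq_1 \subsetneq \cdots \subsetneq \fq_n = \fq$ in $R$ realizing the value $n \le \hgt_R(\fq)$, and lift it inductively: set $\fp_n = \fp$, then apply going-down to produce $\fp_{n-1} \subseteq \fp_n$ with $\fp_{n-1} \cap R = \fq_{n-1}$, then $\fp_{n-2} \subseteq \fp_{n-1}$ with $\fp_{n-2} \cap R = \fq_{n-2}$, and so on down to $\fp_0$. The resulting chain $\fp_0 \subseteq \fp_1 \subseteq \cdots \subseteq \fp_n = \fp$ is automatically strict, since an equality $\fp_i = \fp_{i+1}$ would contract to the equality $\fq_i = \fq_{i+1}$, contradicting strictness of the original chain. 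Hence $\hgt_S(\fp) \ge n$, and taking the supremum over all such chains yields $\hgt_S(\fp) \ge \hgt_R(\fq)$ (the case $\hgt_R(\fq) = \infty$ is handled identically by producing chains of arbitrary length).

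There is essentially no obstacle: the only substantive input is flatness of $S$ over $R$, which is immediate from $E$ being $F$-free, and the finite-variable hypothesis on $R$ plays no essential role beyond placing us in a setting where going-down is available in its familiar form.
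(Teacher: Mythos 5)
Your proof is correct, but it takes a genuinely different route from the paper. The paper argues via dimension theory: the natural map $E \otimes_F (R/\fq) \to S/\fp$ is surjective, Krull dimension of a finitely generated algebra is unchanged under base field extension, and in a polynomial ring in $n$ variables one has $\dim(R/\fq) = n - \hgt_R(\fq)$; combining these gives $n - \hgt_S(\fp) \le n - \hgt_R(\fq)$. That argument leans heavily on the finite-variable hypothesis, since it needs the height--dimension formula for affine algebras. Your argument instead uses that $S$ is free, hence flat, over $R$, and lifts a strict chain below $\fq$ to a strict chain below $\fp$ by going-down; the strictness of the lifted chain from the distinctness of contractions is exactly right. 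What your approach buys is generality: it makes no use of the number of variables, and so it directly proves part (b) of the remark following the proposition in the paper (that the statement holds in the infinite variable case), which the paper asserts without proof. What the paper's approach buys is that it stays entirely within the elementary dimension theory of affine rings already in play elsewhere in that section, without invoking going-down for flat maps.
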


\begin{proof}
The natural homomorphism $E \otimes_F R/\fp \to S/\fq$ is surjective. We thus find
\begin{displaymath}
\dim(R/\fp) = \dim(E \otimes_F R/\fp) \ge \dim(S/\fq),
\end{displaymath}
where $\dim$ denotes Krull dimension. Since $\dim(R/\fp)=n-\hgt_R(\fp)$ and $\dim(S/\fq)=n-\hgt_S(\fq)$, the result follows.
\end{proof}

\begin{remark}
(a) We can actually have $\hgt_R(\fq)<\hgt_S(\fp)$. For instance, let $F=\bC$ and $E=\bC(t)$, and take $\fp$ to be the ideal generated by $x_1-tx_2$. Then $\fp$ has height~1 but $\fq=0$ has height~0. (b) The proposition holds in the infinite variable case too.
\end{remark}

\begin{proposition} \label{prop:supht}
Let $R$ be a ring. Suppose the following condition holds:
\begin{itemize}
\item[$(\ast)$] If $I$ is a finitely generated ideal of height $c<\infty$, then there are only finitely many primes $\fp$ of height $c$ that contain $I$.
\end{itemize}
Let $I$ be an ideal of $R$ and let $I=\bigcup_{\alpha \in \cI} J_{\alpha}$ be a directed union, where $J_{\alpha}$ are ideals contained in $I$. Then $\hgt_R(I) = \sup_{\alpha \in \cI} \hgt_R(J_{\alpha})$.
\end{proposition}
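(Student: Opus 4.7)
The inequality $\sup_{\alpha} \hgt_R(J_{\alpha}) \le \hgt_R(I)$ is immediate from $J_\alpha \subseteq I$. For the reverse, my plan is to prove the stronger statement that $\hgt_R(I) = \sup_K \hgt_R(K)$, where $K$ ranges over finitely generated sub-ideals of $I$. This implies the proposition, because any finitely generated $K = (f_1, \ldots, f_r) \subseteq I$ lies inside some single $J_\alpha$: each $f_i$ lies in some $J_{\alpha_i}$, and directedness provides a common $J_\alpha$ containing them all, whence $\hgt_R(K) \le \hgt_R(J_\alpha) \le \sup_\beta \hgt_R(J_\beta)$. So the problem reduces to the case where the directed system consists of all finitely generated sub-ideals of $I$.

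Set $c = \sup_K \hgt_R(K)$ over finitely generated $K \subseteq I$; I may assume $c < \infty$, and aim to produce a prime of $R$ of height $\le c$ containing $I$. Since heights take values in $\bN \cup \{\infty\}$ and $c$ is finite, the sup is attained, so I can fix a finitely generated $K_0 \subseteq I$ with $\hgt_R(K_0) = c$. Condition $(\ast)$ then furnishes a finite list $\fp_1, \ldots, \fp_m$ of \emph{all} primes of height $c$ containing $K_0$. For every finitely generated $K$ with $K_0 \subseteq K \subseteq I$ I have $c = \hgt_R(K_0) \le \hgt_R(K) \le c$, so $\hgt_R(K) = c$; hence some prime of height $c$ sits over $K$, and as it also contains $K_0$ it must be one of the $\fp_i$. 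Therefore the set $S_K := \{i : \fp_i \supseteq K\}$ is a nonempty subset of $\{1, \ldots, m\}$.

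The finishing move is a pigeonhole/compactness step: the family $\{S_K\}$ is downward-directed as $K$ grows, and consists of nonempty subsets of the finite set $\{1, \ldots, m\}$, so $\bigcap_K S_K$ is nonempty. Any index $i$ in this intersection picks out a prime $\fp_i$ of height $c$ containing every finitely generated $K \subseteq I$ with $K \supseteq K_0$, and hence containing $I$ itself, since for any $x \in I$ the ideal $K_0 + (x)$ is such a $K$. This yields $\hgt_R(I) \le c$, as desired. The crux is that $(\ast)$ turns what is a priori an infinite search for a prime over $I$ into a pigeonhole argument inside a finite set of primes over the single ideal $K_0$; without a finiteness hypothesis of this flavor I do not see how to conclude.
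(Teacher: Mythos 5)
Your proof is correct and follows essentially the same strategy as the paper: reduce to finitely generated subideals, then use condition $(\ast)$ together with the finite intersection property of a downward-directed family of nonempty finite sets of height-$c$ primes to produce a single prime of height $c$ over all of $I$. The only (cosmetic) difference is that you anchor the pigeonhole on one ideal $K_0$ attaining the supremum, whereas the paper passes to a cofinal subsystem on which all heights equal $c$ and intersects the sets $\cP_\alpha$ directly; your version has the small virtue of making the ``standard compactness result'' completely explicit.
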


\begin{proof}
For any $J \subset I$ we have $\hgt_R(J) \le \hgt_R(I)$, and so $\sup_{\alpha \in \cI} \hgt_R(J_{\alpha}) \le \hgt_R(I)$. We now prove the reverse inequality. First, suppose that the $J_{\alpha}$ are finitely generated. If $\sup_{\alpha} \hgt_R(J_{\alpha})$ is infinite then there is nothing to prove, so suppose it is a finite number $c$. Passing to a cofinal subset, we may as well suppose that $\hgt_R(J_{\alpha})=c$ for all $\alpha$. For each $\alpha$, let $\cP_{\alpha}$ be the set of prime ideals of height $c$ containing $J_{\alpha}$; this set is finite by hypothesis. Of course, if $\alpha \le \beta$ then $\cP_{\beta} \subset \cP_{\alpha}$. By a standard compactness result, we have $\bigcap_{\alpha \in \cI} \cP_{\alpha} \ne \emptyset$. We can thus find a prime $\fp$ of height $c$ such that $J_{\alpha} \subset \fp$ for all $\alpha$. Since $I=\bigcup_{\alpha \in \cI} J_{\alpha}$, we thus find $I \subset \fp$, and so $\hgt_R(I) \le c$.

We now treat the general case. For each $\alpha$, let $\{J_{\alpha,\beta}\}_{\beta \in \cK_{\alpha}}$ be the finitely generated ideals contained in $J_{\alpha}$. Then
\begin{displaymath}
\hgt_R(I) = \sup_{\alpha,\beta} J_{\alpha,\beta} = \sup_{\alpha} \sup_{\beta} \hgt(J_{\alpha,\beta}) = \sup_{\alpha} \hgt_R(J_{\alpha}),
\end{displaymath}
where in the first and last step we used the previous case.
\end{proof}

\begin{proposition} \label{prop:polystar}
Suppose $R$ is a polynomial ring over a field. Then condition $(\ast)$ holds.
\end{proposition}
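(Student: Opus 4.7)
The plan is to reduce the problem to showing that a finitely generated ideal in $R$ has only finitely many minimal primes, and to handle that by descending to a finite-variable Noetherian subring. So let $I$ be a finitely generated ideal of height $c < \infty$.

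The first step is to observe that any prime $\mathfrak{p}$ of height $c$ containing $I$ must be minimal over $I$: if $\mathfrak{p}' \subsetneq \mathfrak{p}$ were a prime containing $I$, then any strict chain ending at $\mathfrak{p}'$ extends by $\mathfrak{p}$, so $\hgt_R(\mathfrak{p}') < \hgt_R(\mathfrak{p}) = c$, contradicting $\hgt_R(I) = c$. Thus it suffices to bound the number of minimal primes over $I$.

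Next, because $I$ is finitely generated, its generators involve only finitely many of the variables of $R$, so I can write $I = I_0 R$ with $I_0 \subset R_0 := F[x_1, \ldots, x_n]$ for some $n$, where $F$ is the base field. Viewing $R$ as a polynomial ring $R_0[Y]$ over $R_0$ in the remaining variables $Y$, Noetherianity of $R_0$ gives finitely many minimal primes $\mathfrak{q}_1, \ldots, \mathfrak{q}_k$ over $I_0$. The key claim to establish is that the minimal primes of $I$ in $R$ are exactly the extensions $\mathfrak{q}_i R$. Each $\mathfrak{q}_i R$ is prime, since $R/\mathfrak{q}_i R \cong (R_0/\mathfrak{q}_i)[Y]$ is a polynomial ring over a domain and hence a domain. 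Conversely, any prime $\mathfrak{p}$ of $R$ with $\mathfrak{p} \supseteq I$ contracts to a prime $\mathfrak{p} \cap R_0 \supseteq I_0$, which contains some $\mathfrak{q}_i$, whence $\mathfrak{p} \supseteq \mathfrak{q}_i R$; if $\mathfrak{p}$ is minimal over $I$, equality must hold.

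The whole argument is routine packaging once one sees the reduction to counting minimal primes, and I do not foresee any real obstacle. The only point that requires a moment's care is verifying that the extensions $\mathfrak{q}_i R$ remain prime when the adjoined collection $Y$ of variables is infinite; this is, however, immediate from the fact that polynomial rings over domains in any number of variables are again domains.
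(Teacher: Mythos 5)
Your proof is correct, and while it follows the same overall reduction as the paper -- write $I=I_0R$ with $I_0$ in a finite-variable subring $R_0$, take the finitely many minimal primes $\fq_1,\ldots,\fq_k$ of $I_0$ in the Noetherian ring $R_0$, and show every height-$c$ prime over $I$ equals some $\fq_iR$ -- the way you pin down the last step is genuinely different and more self-contained. The paper argues that a height-$c$ prime $\fp\supseteq I$ is finitely generated (its Proposition on finite-height primes), hence extended from a finite-variable subring $R_1\supseteq R_0$, and then compares heights in $R_1$ using the fact that extension to a larger polynomial ring preserves height [ESS, Prop.~3.3]; this identifies $\fp'$ with a minimal prime of $I'R_1$. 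You instead observe that a height-$c$ prime containing a height-$c$ ideal is automatically minimal over it, and that every minimal prime over $I$ is of the form $\fq_iR$ because $\fq_iR$ is itself prime (as $R/\fq_iR\cong(R_0/\fq_i)[Y]$ is a domain, even for infinitely many adjoined variables $Y$) and sits between $I$ and $\fp$. Your route avoids invoking both the finite generation of finite-height primes and the height-invariance of extensions, at the cost of nothing; the paper's route simply reuses machinery it has already set up. One cosmetic remark: you do not actually need the minimal primes of $I$ to be \emph{exactly} the $\fq_iR$ -- containment of the set of height-$c$ primes over $I$ in $\{\fq_1R,\ldots,\fq_kR\}$ already gives the finiteness asserted in $(\ast)$.
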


\begin{proof}
Let $I$ be a finitely generated ideal of $R$ of height $c$. Then $I$ is extended from an ideal $I'$ of some finite variable subring $R_0$ of $R$. Since such extensions do not affect height \cite[Proposition~3.3]{stillman}, we see that $I'$ has height $c$ also. Let $\fq_1, \ldots, \fq_r$ be the minimal primes above $I'$ in $R_0$. Now, let $\fp$ be height $c$ prime of $R$ containing $I$. Then $\fp$ is finitely generated by Proposition~\ref{prop:primefg}, and thus extended from a prime $\fp'$ of a finite variable subring $R_1$ of $R$, which we can assume contains $R_0$. The ideal $I'R_1$ has height $c$ and $\fq_1R_1, \ldots, \fq_rR_1$ are the minimal primes over it. Since $\fp'$ contains $I'R_1$ and has the same height, it follows that $\fp'=\fq_iR_1$ for some $i$, and thus $\fp=\fq_iR$. We thus see that there are only $r$ choices for $\fp$.
\end{proof}

\begin{proposition} \label{prop:htcontract}
Let $A$ be a polynomial ring over a field and let $I$ be a finite height ideal of $A[x]$. Then $\hgt_A(I \cap A) \le \hgt_{A[x]}(I)$.
\end{proposition}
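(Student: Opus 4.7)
The plan is to reduce to the finite-variable case by descent through a finitely generated polynomial subring of $A$. Since $\hgt_{A[x]}(I)$ is a finite nonnegative integer, the minimum over primes containing $I$ is attained, so I may pick a prime $\fp$ of $A[x]$ with $I \subseteq \fp$ and $\hgt_{A[x]}(\fp) = \hgt_{A[x]}(I)$. Then $\fp \cap A \supseteq I \cap A$ gives $\hgt_A(I \cap A) \le \hgt_A(\fp \cap A)$, so it is enough to prove $\hgt_A(\fp \cap A) \le \hgt_{A[x]}(\fp)$ for every finite-height prime $\fp$ of $A[x]$.

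By Proposition~\ref{prop:primefg} applied to the polynomial ring $A[x]$, such a $\fp$ is finitely generated, say $\fp = (g_1, \ldots, g_n)$. Let $A_0 \subseteq A$ be the finite-variable polynomial subring generated by the variables appearing in the $g_i$, and set $\fp_0 := (g_1, \ldots, g_n) \subseteq A_0[x]$. Since $A$ is a polynomial extension of $A_0$, it is faithfully flat over $A_0$, so $A[x]$ is faithfully flat over $A_0[x]$; in particular $\fp_0 = \fp \cap A_0[x]$ is prime. Using the identification $A[x] = A \otimes_{A_0} A_0[x]$, the commutation of intersection with flat base change yields $\fp \cap A = \fq_0 A$ where $\fq_0 := \fp_0 \cap A_0$.

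Two applications of \cite[Proposition~3.3]{stillman} (heights are preserved under extension to a polynomial ring) give $\hgt_{A[x]}(\fp) = \hgt_{A_0[x]}(\fp_0)$ and $\hgt_A(\fp \cap A) = \hgt_{A_0}(\fq_0)$, reducing the problem to the inequality $\hgt_{A_0}(\fq_0) \le \hgt_{A_0[x]}(\fp_0)$ in the finite-variable polynomial rings $A_0$ and $A_0[x]$. This is classical: the injection $A_0/\fq_0 \hookrightarrow A_0[x]/\fp_0$ exhibits the target as a quotient of the one-variable polynomial ring $(A_0/\fq_0)[x]$, whose Krull dimension is $\dim(A_0/\fq_0) + 1$, and translating dimensions into codimensions in $A_0$ and $A_0[x]$ gives the required inequality. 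The main point to handle carefully is the identification $\fp \cap A = \fq_0 A$; once that flat base-change computation is made explicit, the infinite-variable nature of $A$ is handled uniformly and the remaining steps are routine.
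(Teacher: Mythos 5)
Your proposal is correct and follows essentially the same route as the paper: reduce to the case of a finite-height prime, use Proposition~\ref{prop:primefg} to descend to a finite-variable subring, invoke height-invariance under polynomial extension, and finish with the classical finite-variable inequality. The only difference is that you make explicit (via flat base change) why $\fp \cap A$ is extended from $A_0$, a detail the paper leaves implicit.
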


\begin{proof}
If $I$ is prime then it is finitely generated (Proposition~\ref{prop:primefg}) and the result follows from the corresponding result for finite variable polynomial rings and the fact that extending to larger polynomial rings does not change height \cite[Proposition~3.3]{stillman}. Now suppose that $I$ is a general ideal of finite height $c$. Let $\fp$ be a height $c$ prime containing $I$. Then $I \cap A \subset \fp \cap A$, and the latter has height $\le c$. Thus the result follows.
\end{proof}

Let $R$ be a ring and let $S$ be a multiplicative subset. A basic theorem states that the primes of $S^{-1} R$ correspond bijectively (via extension and contraction) to the primes of $R$ disjoint from $S$. The following result shows that this correspondence preserves height.

\begin{proposition} \label{prop:localht}
Let $\fp$ be a prime of $S^{-1}R$ and let $\fq$ be its contraction to $R$. Then $\hgt_R(\fq)=\hgt_{S^{-1}R}(\fp)$.
\end{proposition}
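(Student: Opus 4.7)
The plan is to use the standard order-preserving bijection between primes of $S^{-1}R$ and primes of $R$ disjoint from $S$, proving both inequalities $\hgt_R(\fq) \ge \hgt_{S^{-1}R}(\fp)$ and $\hgt_R(\fq) \le \hgt_{S^{-1}R}(\fp)$ by lifting and pushing down prime chains.

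For the inequality $\hgt_R(\fq) \ge \hgt_{S^{-1}R}(\fp)$, I would start with a strict chain of primes $\fp_0 \subsetneq \fp_1 \subsetneq \cdots \subsetneq \fp_n = \fp$ in $S^{-1}R$ and contract each term to $R$, getting primes $\fq_i = \fp_i \cap R$ with $\fq_n = \fq$. Strictness is preserved because contraction followed by extension recovers $\fp_i$ (by the basic correspondence theorem), so if two contractions agreed the original primes would too. Hence the contracted chain is strict of the same length, yielding $\hgt_R(\fq) \ge n$, and taking the supremum gives the inequality.

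For the reverse inequality, I would start with a strict chain $\fq_0 \subsetneq \cdots \subsetneq \fq_n = \fq$ in $R$. Since $\fq = \fp \cap R$ is disjoint from $S$ and each $\fq_i$ is contained in $\fq$, every $\fq_i$ is also disjoint from $S$, so each extends to a prime $S^{-1}\fq_i$ of $S^{-1}R$ with $S^{-1}\fq_n = \fp$. Again by the correspondence theorem, distinct primes disjoint from $S$ have distinct extensions, so the extended chain is strict of length $n$, giving $\hgt_{S^{-1}R}(\fp) \ge n$ and hence the reverse inequality upon taking the supremum.

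There is no real obstacle here: this is a standard application of the prime correspondence under localization, and the only point that needs mentioning is the preservation of strict inclusions, which is immediate from the fact that extension and contraction are mutually inverse on primes disjoint from $S$. Combining both inequalities yields the equality.
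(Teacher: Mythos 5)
Your proof is correct and follows essentially the same route as the paper: contract a strict chain from $S^{-1}R$ to get one inequality, and extend a strict chain of primes (all disjoint from $S$) to get the other. The extra remarks on preservation of strictness are fine and just make explicit what the paper leaves implicit.
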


\begin{proof}
Let $\fp_0 \subset \cdots \subset \fp_r=\fp$ be a strict chain of primes in $S^{-1} R$. Contracting gives a strict chain of primes in $R$. Thus $\hgt_R(\fq) \ge \hgt_{S^{-1}R}(\fp)$.

Now let $\fq_0 \subset \cdots \subset \fq_r=\fq$ be a strict chain of primes in $R$. Since $\fq$ is the contraction of an ideal from $S^{-1}R$, it is disjoint from $S$, and so all the ideals $\fq_i$ are as well. Thus extending gives a strict chain, and so $\hgt_{S^{-1}R}(\fp) \ge \hgt_R(\fq)$.
\end{proof}

\begin{corollary} \label{cor:localht}
Let $I$ be an ideal of $R$. Then $\hgt_R(I) \le \hgt_{S^{-1} R}(S^{-1} I)$.
\end{corollary}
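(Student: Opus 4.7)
The plan is to derive the corollary directly from the definition of height as an infimum over containing primes, combined with Proposition~\ref{prop:localht}. If $S^{-1}I$ is the unit ideal of $S^{-1}R$ then $\hgt_{S^{-1}R}(S^{-1}I)=\infty$ by convention and the inequality is automatic; otherwise set $c=\hgt_{S^{-1}R}(S^{-1}I)$. If $c=\infty$ we are again done, so we may assume $c$ is a finite integer and pick a prime $\fp$ of $S^{-1}R$ containing $S^{-1}I$ with $\hgt_{S^{-1}R}(\fp)=c$. Such a $\fp$ exists because $\hgt_{S^{-1}R}(S^{-1}I)$ is an infimum of heights of containing primes, a subset of $\bN \cup \{\infty\}$, so when finite it is attained.

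Next I would let $\fq$ denote the contraction of $\fp$ to $R$. Since $\fp \supset S^{-1}I$, its contraction to $R$ contains the contraction of $S^{-1}I$, which in turn contains the original ideal $I$; hence $\fq$ is a prime of $R$ with $I \subset \fq$. Applying Proposition~\ref{prop:localht} gives $\hgt_R(\fq)=\hgt_{S^{-1}R}(\fp)=c$, and the definition of $\hgt_R(I)$ as an infimum over primes of $R$ containing $I$ immediately yields $\hgt_R(I) \le \hgt_R(\fq)=c$, which is the claimed inequality.

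No step here presents a genuine obstacle: the corollary is essentially a repackaging of the prime-correspondence for localization together with the height equality already established in Proposition~\ref{prop:localht}. The only points requiring attention are the edge cases in which $S^{-1}I$ is the unit ideal or has infinite height, which must be disposed of first so that we are entitled to choose a minimizing prime in $S^{-1}R$ to contract back to $R$.
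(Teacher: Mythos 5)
Your proof is correct and follows essentially the same route as the paper: dispose of the infinite-height case, choose a height-$c$ prime of $S^{-1}R$ containing $S^{-1}I$, contract it, and apply Proposition~\ref{prop:localht} together with $I \subset \fq$. The extra attention to the unit-ideal case and to the attainment of the infimum is harmless but not needed beyond what the paper already records.
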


\begin{proof}
Let $c=\hgt_{S^{-1} R}(S^{-1}I)$. If $c=\infty$ there is nothing to prove, so suppose $c$ is finite. Let $\fp$ be a height $c$ prime of $S^{-1} R$ containing $S^{-1} I$. Let $\fq$ be the contraction of $\fp$. Then $\hgt_R(\fq)=c$ by the proposition. Since $I \subset \fq$, we thus have $\hgt_R(I) \le c$.
\end{proof}

\section{Comparing heights in $\cR^{\flat}$ and $\cR$} \label{s:hgt-compar}

Let $\cR_{>n}$ be defined like $\cR$, but only using the variables $x_i$ with $i>n$. Similarly define $\cR^{\flat}_{>n}$, and so on. We have a natural isomorphism $\cR=\cR_{>n}[x_1,\ldots,x_n]$, and similarly for the other variants.

\begin{proposition} \label{prop:zerocontract}
Let $I$ be an ideal of $\cR$ of finite height. Then $I \cap \cR_{>n}=0$ for $n \gg 0$. Similarly for $\cR^{\flat}$ and $\cR^0$.
\end{proposition}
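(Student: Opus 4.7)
The plan is to reduce to the finitely generated case and exploit polynomiality via a free-module decomposition. By Proposition~\ref{prop:finhtgen}, $I$ lies inside a finitely generated non-unital ideal $J=(f_1,\ldots,f_r)$; since $I\cap\cR_{>n}\subseteq J\cap\cR_{>n}$, it suffices to prove the vanishing for $J$, and by splitting into homogeneous components we may take each $f_i$ homogeneous of positive degree.

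Since $\cR_{>n}\cong\cR$ as graded $K$-algebras, $\cR_{>n}$ is polynomial over $K$ by \cite{stillman}; fix a polynomial basis $\{z_\mu\}_{\mu\in M}$ of $\cR_{>n}$, so that $\{z_\mu\}\cup\{x_1,\ldots,x_n\}$ is a polynomial basis of $\cR$. Each $f_i$ involves only finitely many of these basis elements, so there is a finite $M_0\subseteq M$ with every $f_i$ lying in the finite-variable polynomial subring $T=S[x_1,\ldots,x_n]$, where $S=K[z_\mu:\mu\in M_0]\subseteq\cR_{>n}$. Because $\cR_{>n}$ is a free $S$-module via the basis of monomials in $\{z_\mu:\mu\in M\setminus M_0\}$ and $\cR$ is the corresponding free $T$-module, a direct-sum decomposition yields
\[
J\cap\cR_{>n}\;=\;(J_0\cap S)\cdot\cR_{>n},
\]
where $J_0=(f_1,\ldots,f_r)\subseteq T$. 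By faithful flatness of $\cR_{>n}$ over $S$, this vanishes iff $J_0\cap S=0$ in $T$.

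The main obstacle is thus to show $J_0\cap S=0$ for all sufficiently large $n$. Faithful flatness of $\cR$ over $T$ gives $\hgt_T(J_0)=\hgt_\cR(J)=:c$, a fixed finite number. For $n$ past the smallest variable-index appearing in any monomial of any $f_i$, each $f_i$ has positive $x_1,\ldots,x_n$-degree in $T$; this is a necessary condition, though not sufficient on its own, as the toy example $J_0=(x_1,x_1-s)=(x_1,s)$ with $s\in S$ shows. What rules out such counterexamples here is that the polynomial basis of $\cR_{>n}$ must be re-chosen as $n$ grows: an element serving as a basis generator of $\cR_{>n}$ gets re-expressed, once $n$ is increased to $n'>n$, as a new basis element of $\cR_{>n'}$ plus a polynomial in the intermediate variables $x_{n+1},\ldots,x_{n'}$, so for $n$ large enough no obstructive $s\in S$ arises. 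I expect to close the argument via a localization analysis in $\Frac(S)[x_1,\ldots,x_n]$, a polynomial ring of Krull dimension $n$: Corollary~\ref{cor:localht} gives that the extension of $J_0$ has height at least $c$ there, and the remaining task is to show that this extension is a proper ideal, which is equivalent to $J_0\cap S=0$. The same template applies to $\cR^\flat$ and $\cR^0$ after replacing the ground field.
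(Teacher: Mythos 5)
Your reduction to a finitely generated ideal $J=(f_1,\ldots,f_r)$ and the free-module decomposition giving $J\cap\cR_{>n}=(J_0\cap S)\cdot\cR_{>n}$ are both fine, but the argument stops exactly where the content of the proposition begins: you never prove that $J_0\cap S=0$ (equivalently, that $J_0$ extends to a proper ideal of $\Frac(S)[x_1,\ldots,x_n]$) for $n\gg 0$. The paragraph offered in its place is a heuristic rather than a proof, and the tool you plan to use points the wrong way: Corollary~\ref{cor:localht} gives a \emph{lower} bound (height $\ge c$) for the extension of $J_0$ to the localization, whereas properness is the assertion that this height is not $+\infty$, i.e.\ an \emph{upper} bound. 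Nothing in your setup rules out the analogue of your own toy example $(x_1,x_1-s)$ occurring for every $n$; one needs a structural input that forces the contraction to $\cR_{>n}$ to shrink as $n$ grows. (A smaller point: the proposition does not assume $I$ homogeneous, and the reduction ``split each $f_i$ into homogeneous components of positive degree'' can fail --- the components of $1+x_1$ generate the unit ideal --- so it should be dropped or justified.)

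The missing input, in the paper's proof, is a Noether-normalization step in the $x$-variables. Given a nonzero $f\in I$, a linear change of variables $\gamma$ involving only finitely many of the $x_i$ makes $\gamma(f)$ monic in $x_1$ \cite[Lemma~4.9]{stillman}, and monicity forces $\hgt_{\cR_{>1}}(\gamma(I)\cap\cR_{>1})=\hgt_{\cR}(I)-1$ (this is \cite[Corollary~3.8]{stillman} for finitely generated ideals, and Propositions~\ref{prop:supht} and~\ref{prop:polystar} reduce the general case to that one). Proposition~\ref{prop:htcontract} then gives $\hgt_{\cR_{>n}}(I\cap\cR_{>n})<\hgt_{\cR}(I)$ once $n$ exceeds the indices of the variables used in $\gamma$, and the proposition follows by induction on the height, since a nonzero ideal of the polynomial ring $\cR$ has height at least $1$. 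Monicity is precisely what excludes your toy counterexample, and without some substitute for it your approach cannot be completed as written.
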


\begin{proof}
Let $f$ be a non-zero element of $I$. As in \cite[Lemma~4.9]{stillman}, we can find a linear change of variables $\gamma$ in finitely many of the $x$'s so that $\gamma(f)$ is monic in $x_1$. We then have $\hgt_{\cR_{>1}}(\gamma(I) \cap \cR_{>1})=\hgt_{\cR}(I)-1$. Indeed, if $I$ is fintiely generated, this is \cite[Corollary~3.8]{stillman} (which easily reduces to the finite variable case), while Propositions~\ref{prop:supht} and~\ref{prop:polystar} allow us to reduce to this case. It follows from Proposition~\ref{prop:htcontract} that $\hgt_{\cR_{>n}}(\gamma(I) \cap \cR_{>n})<\hgt_{\cR}(I)$ for all $n \ge 1$. Taking $n$ larger than the variables used in $\gamma$, we thus have $\hgt_{\cR_{>n}}(I \cap \cR_{>n})<\hgt_{\cR}(I)$. Thus, by induction on height, the result follows.
\end{proof}

\begin{remark}
The proposition does not hold for $\cR^+$: indeed, the ideal $(t)$ is a counterexample. We will formulate a version for $\cR^+$ in Proposition~\ref{prop:zerocontract2}.
\end{remark}

\begin{corollary}
Let $\fp$ be a prime ideal of $\cR$ of finite height. Then $\fp$ is the contraction of a prime ideal of $\Frac(\cR_{>n})[x_1,\ldots,x_n]$ for all sufficiently large $n$.
\end{corollary}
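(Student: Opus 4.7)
The plan is to combine Proposition~\ref{prop:zerocontract} with the standard bijection between primes in a ring and primes in a localization. Write $\cR = \cR_{>n}[x_1,\ldots,x_n]$, and note that $\cR_{>n}$ is an integral domain (it is an inverse limit of finite variable polynomial rings over $K$, and embeds into $\cR$). Let $S_n = \cR_{>n} \setminus \{0\}$, so that the localization $S_n^{-1} \cR$ is exactly $\Frac(\cR_{>n})[x_1,\ldots,x_n]$.

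First I would apply Proposition~\ref{prop:zerocontract} to the finite height prime $\fp$: for all sufficiently large $n$, we have $\fp \cap \cR_{>n} = 0$. Equivalently, $\fp \cap S_n = \emptyset$ for such $n$. Then by the standard correspondence between primes of a localization and primes of the base ring disjoint from the multiplicative set, the extension $\fp^e := S_n^{-1} \fp$ is a prime of $\Frac(\cR_{>n})[x_1,\ldots,x_n]$, and its contraction back to $\cR$ is exactly $\fp$. This gives precisely the statement of the corollary.

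There is essentially no obstacle here beyond invoking the two ingredients correctly; the only things worth being careful about are (i) verifying that $\cR_{>n}$ is a domain so that localizing at its nonzero elements really yields $\Frac(\cR_{>n})$, and (ii) noting that $\fp \cap \cR_{>n} = \{0\}$ is exactly the condition $\fp \cap S_n = \emptyset$ needed to apply the prime correspondence. Both points are immediate. So the corollary is a direct consequence of Proposition~\ref{prop:zerocontract} and basic localization theory, with no real technical content beyond what has already been established.
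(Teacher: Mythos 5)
Your proof is correct and is exactly the intended argument: the paper states this corollary without proof as an immediate consequence of Proposition~\ref{prop:zerocontract} together with the standard extension--contraction correspondence for primes disjoint from a multiplicative set (which the paper itself recalls just before Proposition~\ref{prop:localht}). Your two points of care — that $\cR_{>n}$ is a domain and that $\fp \cap \cR_{>n} = 0$ is precisely disjointness from $S_n$ — are the right ones and both hold.
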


\begin{proposition} \label{prop:htbd1}
Let $I$ be an ideal of $\cR^{\flat}$. Then $\hgt_{\cR^{\flat}}(I) \le \hgt_{\cR}(I \cR)$.
\end{proposition}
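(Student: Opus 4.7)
Set $c := \hgt_{\cR}(I\cR)$. If $c = \infty$ there is nothing to show, so assume $c$ is finite. The plan is to exhibit a single prime of $\cR^{\flat}$ containing $I$ of height at most $c$. The natural candidate is $\fq := \fp \cap \cR^{\flat}$, where $\fp$ is a prime of $\cR$ of height $c$ containing $I\cR$; then $I \subset \fq$, so it suffices to prove $\hgt_{\cR^{\flat}}(\fq) \le c$.

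To estimate $\hgt_{\cR^{\flat}}(\fq)$, I would localize both $\cR$ and $\cR^{\flat}$ into a finite-variable setting where Proposition~\ref{prop:ht-ext} is directly applicable. Since $\fp$ has finite height, Proposition~\ref{prop:zerocontract} gives $\fp \cap \cR_{>n} = 0$ for $n$ sufficiently large, and hence $\fq \cap \cR^{\flat}_{>n} = 0$ as well, because $\cR^{\flat}_{>n} \subset \cR_{>n}$. Setting $K_n := \Frac(\cR_{>n})$ and $K^{\flat}_n := \Frac(\cR^{\flat}_{>n})$, we have $K^{\flat}_n \subset K_n$. Inverting $\cR_{>n}\setminus 0$ (respectively $\cR^{\flat}_{>n}\setminus 0$), the primes $\fp$ and $\fq$ correspond to primes $\fp_n$ of $K_n[x_1,\ldots,x_n]$ and $\fq_n$ of $K^{\flat}_n[x_1,\ldots,x_n]$, with heights unchanged by Proposition~\ref{prop:localht}.

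The next step is to verify that $\fq_n$ is the contraction of $\fp_n$ along the inclusion $K^{\flat}_n[x_1,\ldots,x_n] \hookrightarrow K_n[x_1,\ldots,x_n]$. Since this inclusion is precisely the base change $K_n \otimes_{K^{\flat}_n} (-)$, Proposition~\ref{prop:ht-ext} will then deliver $\hgt(\fq_n) \le \hgt(\fp_n)$, hence $\hgt_{\cR^{\flat}}(\fq) \le \hgt_{\cR}(\fp) = c$, as desired. The contraction identity is a short fraction manipulation using that $\fp$ is prime and disjoint from $\cR_{>n}\setminus 0$: if $f/s \in \fp_n$ lies in $K^{\flat}_n[x_1,\ldots,x_n]$, we can rewrite it with denominator in $\cR^{\flat}_{>n}$, and primality of $\fp$ (together with $s \notin \fp$) forces the numerator into $\fp \cap \cR^{\flat} = \fq$. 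This is really the only mildly delicate point and is more bookkeeping than genuine obstacle; everything else is an assembly of the propositions already developed in \S\ref{s:bg}.
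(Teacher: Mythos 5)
Your proposal is correct and follows essentially the same route as the paper: pass to the finite-variable localizations $\Frac(\cR_{>n})[x_1,\ldots,x_n]$ and $\Frac(\cR^{\flat}_{>n})[x_1,\ldots,x_n]$ via Proposition~\ref{prop:localht}, and compare heights across the field extension using Proposition~\ref{prop:ht-ext}. The only difference is bookkeeping direction—you define $\fq=\fp\cap\cR^{\flat}$ first and check it localizes to the contraction of $\fp_n$, whereas the paper contracts $\fp'$ downward through the intermediate ring—but these produce the same prime and the same bound.
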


\begin{proof}
Let $c=\hgt_{\cR}(I\cR)$. If $c=\infty$ there is nothing to prove, so suppose $c$ is finite. Let $\fp$ be a height $c$ prime of $\cR$ containing $I \cR$. Let $n$ be such that $\fp$ is contracted from a prime $\fp'$ of $\Frac(\cR_{>n})[x_1, \ldots, x_n]$, necessarily of height $c$ (by Proposition~\ref{prop:localht}). Let $\fq'$ be the contraction of $\fp'$ to $\Frac(\cR^{\flat}_{>n})[x_1, \ldots, x_n]$. Then $\fq'$ has height $\le c$ by Proposition~\ref{prop:ht-ext}. Let $\fq$ be the contraction of $\fq'$ to $\cR^{\flat}$. Then $\fq$ has the same height as $\fq'$ by Proposition~\ref{prop:localht}, which is $\le c$. Since $I$ is contained in $\fq$, it too has height $\le c$.
\end{proof}

\section{A Nakayama-like lemma} \label{s:nak}

Let $\Pi$ be an infinite product of copies of $A=\bC\lbb t \rbb$, and suppose that $M$ is an $A$-submodule of $\Pi$. We recall several concepts:
\begin{itemize}
\item $M$ is \emph{($t$-adically) complete} if the following holds: given elements $x_i \in t^i M$ for $i \ge 0$ the sum $\sum_{i \ge 0} x_i$ belongs to $M$.
\item $M$ is \emph{($t$-adically) closed} is the following holds: given elements $x_i \in M \cap t^i \Pi$ for $i \ge 0$ the sum $\sum_{i \ge 0} x_i$ belongs to $M$. Obviously, closed implies complete.
\item We let $\Sigma_n(M)$ be the set of elements $x \in \Pi$ such that $t^n x \in M$. It is an $A$-submodule of $\Pi$. We let $\Sigma(M) = \bigcup_{n \ge 1} \Sigma_n(M)$, which we call the \emph{saturation} of $M$.
\end{itemize}
The purpose of this section is to prove the following result:

\begin{proposition} \label{prop:nak}
Let $M$ be an $A$-submodule of $\Pi$. Suppose:
\begin{enumerate}
\item $M$ is complete.
\item $\Sigma_n(M)/M$ is finite dimensional over $\bC$ for all $n$.
\item $\Sigma(M)$ is contained in the closure of $M$.
\end{enumerate}
Then $M=\Sigma(M)$, that is, $M$ is saturated.
\end{proposition}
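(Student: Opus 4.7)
The plan is to prove the inclusion $\Sigma(M) \subseteq M$ constructively: given $x \in \Sigma(M)$, I will build a $t$-adically convergent series $\sum_{i \geq 0} t^i m_i$ with each $m_i \in M$ whose sum equals $x$, and then condition (1) will force this sum, hence $x$, to lie in $M$.

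The series is built by recursion. Set $s_0 = x$. Having chosen $s_0, m_0, \ldots, s_i$ with $s_i \in \Sigma(M)$, use hypothesis (3) to place $s_i$ in the $t$-adic closure of $M$: pick $m_i \in M$ with $s_i - m_i \in t\Pi$, and let $s_{i+1} \in \Pi$ be the (unique) element satisfying $s_i = m_i + t s_{i+1}$. To continue the recursion I need $s_{i+1} \in \Sigma(M)$; more precisely, if $x \in \Sigma_n(M)$ one checks by induction on $i$ that $s_i \in \Sigma_{n+i}(M)$, because $t^{n+i+1}s_{i+1} = t^{n+i}(s_i - m_i)$ and both terms on the right lie in $M$.

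Telescoping the relations $s_i = m_i + t s_{i+1}$ yields
\begin{displaymath}
x - \sum_{i=0}^{k} t^i m_i = t^{k+1} s_{k+1} \in t^{k+1} \Pi,
\end{displaymath}
so the partial sums converge to $x$ in the $t$-adic topology on $\Pi$. On the other hand, each term $t^i m_i$ lies in $t^i M$, so completeness of $M$ produces an element $y \in M$ summing the same series. Since $\Pi$ is Hausdorff in the $t$-adic topology, $y = x$, and therefore $x \in M$.

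The only real step is the bookkeeping that confirms $s_i \in \Sigma(M)$ at every stage, and this is mechanical, so I do not anticipate a genuine obstacle. I note that hypothesis (2) does not appear to enter this direct argument at all; presumably it is present because it is the tool used to verify hypothesis (3) in the intended application (the saturation theorem of \S \ref{s:sat}) rather than because it is logically needed for the lemma itself.
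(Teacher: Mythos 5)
Your argument is correct, and it is genuinely different from (and more elementary than) the one in the paper. I checked the two points where something could go wrong: the induction $s_i \in \Sigma_{n+i}(M)$ is exactly as you say (from $t^{n+i+1}s_{i+1} = t^{n+i}s_i - t^{n+i}m_i$ with both terms in $M$), and the series $\sum_{i \ge 0} t^i m_i$ is precisely of the shape to which the definition of completeness applies, while $\bigcap_k t^k\Pi = 0$ identifies its sum with $x$. The paper instead chooses a basis $x_1, x_2, \ldots$ of $\Sigma(M)/M$ adapted to the filtration by the $\Sigma_n(M)/M$ --- this is where hypothesis (b) enters --- uses (c) to write each $x_i$ as an element of $M$ plus an infinite combination $\sum_j f_{i,j}x_j$ with $f_{i,j} \in t^{\epsilon(j)}A$, and then inverts the operator $1-A$; it is a ``solve the whole linear system at once'' version of the successive approximation that you carry out one $t$-adic digit at a time. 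Your route buys a shorter proof and a sharper statement: hypothesis (b) is indeed not needed for the proposition, and in fact you only use the weaker consequence $\Sigma(M) \subseteq M + t\Pi$ of (c). Your closing remark is also essentially right about why (b) survives in the statement: in the proof of Theorem~\ref{thm:sat} the finite-dimensionality of $Y_{k+\ell}/Y_\ell$ is what guarantees that only finitely many new generators are adjoined, so it is doing real work in the application even though it is superfluous for Proposition~\ref{prop:nak} itself.
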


\begin{proof}
Let $\delta(n)$ be the dimension of $\Sigma_n(M)/M$. Let $x_1, x_2, \ldots$ be elements of $\Sigma(M)$ such that $x_1, x_2, \ldots, x_{\delta(n)}$ is a basis of $\Sigma_n(M)/M$ for each $n$. Note that $x_1, x_2, \ldots$ is a basis for $\Sigma(M)/M$.

Let $y$ be an element of the $t$-adic closure of $M$. We can thus write $y = \sum_{j=0}^{\infty} z_j$, where $z_j \in M \cap t^j \Pi$. Since $t^{-j} z_j \in \Sigma_j(M)$, we can write $t^{-j} z_j=b_j+c_{1,j} x_1 + \cdots + c_{a(j),j} x_{a(j)}$ with $b_j \in M$ and $c_{i,j} \in \bC$. We thus find
\begin{displaymath}
y
= \sum_{j \ge 0} t^j(b_j+c_{1,j} x_1 + \cdots + c_{a(j),j} x_{a(j)}) 
= \sum_{j \ge 0} t^j b_j + \sum_{i \ge 1} \sum_{\delta(j) \ge i} c_{i,j} t^j x_i
\end{displaymath}
Let $\epsilon(i)$ be the minimal value of $j$ such that $\delta(j) \ge i$, with the convention $\epsilon(i)=\infty$ if $\delta(j)<i$ for all $j$. Thus in the inner sum above, we can write $j \ge \epsilon(i)$. Note that $\epsilon(i) \ge 1$ for all $i$ and $\epsilon(i) \to \infty$ as $i \to \infty$.  Now, the first sum above belongs to $M$ by (a). And the inner sum in the second sum converges to an element of $A$ that is dividible by $\epsilon(i)$. We conclude that for any $y$ in the $t$-adic closure of $M$, we can write
\begin{displaymath}
y = b + \sum_{i \ge 0} f_i(t) x_i
\end{displaymath}
where $b \in M$ and $f_i(t) \in t^{\epsilon(i)} A$.

We now apply this to $x_i$, which belongs to the $t$-adic closure of $M$ by (c). We can thus write
\begin{displaymath}
x_i = b_i + \sum_{j \ge 0} f_{i,j} x_j
\end{displaymath}
where $b_i \in M$ and $f_{i,j} \in t^{\epsilon(j)} A$. Let $\ul{x}$ and $\ul{b}$ be the infinite column vectors $(x_1,x_2,\ldots)$ and $(b_1,b_2,\ldots)$, and let $A$ be the infinite square matrix given by $A_{i,j}=f_{i,j}$. We then get the linear equation
\begin{displaymath}
(1-A) \ul{x}=\ul{b}.
\end{displaymath}
The entries of $A$ all belong to the maximal ideal of $A$. In fact, for any $n$ there exists an $m$ such that all columns to the right of the $m$th column of $A$ are divisible by $t^n$. It follows that $1-A$ is invertible, and so the entries of $\ul{x}=(1-A)^{-1} \ul{b}$ belongs to $M$. Thus $x_i \in M$ for all $i$, which completes the proof.
\end{proof}

\section{A result on saturation} \label{s:sat}

Let $S$ be a graded polynomial $\bC$-algebra, where each variable is homogeneous of positive degree; the interesting case is where there are infinitely many variables. Let $R$ be the graded version of $S\lbb t \rbb$; thus $R$ is a graded ring and a degree $d$ element of $R$ is a power series in $t$ with coefficients in $S_d$. Note that elements of $R$ can use infinitely many of the variables in $S$, which is why $R$ can be hard to work with.

Given a homogeneous ideal $I$ of $R$, we define its \emph{saturation}, denoted $\Sat(I)$, to be the set of all elements $f \in R$ such that $t^n f \in I$ for some $n$. Thus $\Sat(I)$ is a homogeneous ideal with $\Sat(I)_n=\Sigma(I_n)$, where $\Sigma$ is as in the previous section. (We note that each graded piece of $R$ is isomorphic to a product of copies of $\bC\lbb t \rbb$.) We would like to know that the saturation of a finitely generated ideal is finitely generated. Unfortunately, we have not been able to prove this. However, we prove a weaker statement that is sufficient for our applications. For an integer $d \ge 0$, define $\Sat_{\le d}(I)$ to be the ideal generated by $I$ and $\Sat(I)_k$ for $0 \le k \le d$. The result is:

\begin{theorem} \label{thm:sat}
If $I$ is finitely generated then so is $\Sat_{\le d}(I)$, for any $d$.
\end{theorem}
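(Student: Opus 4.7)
The plan is induction on $d$. The base case $d=0$ is easy: $\Sat(I)_0$ is either $0$ (if $I\subset R_+$) or all of $A$ (if $I_0\ne 0$, since the saturation of a nonzero ideal of the DVR $A=\bC\lbb t\rbb$ is $A$ itself), so $\Sat_{\le 0}(I)$ equals either $I$ or the unit ideal $R$, each finitely generated.

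For the inductive step, let $J=\Sat_{\le d-1}(I)$, finitely generated by induction; crucially $J_k=\Sat(I)_k$ for every $k<d$, so $J$ is ``already saturated in degrees below $d$''. Because $\Sat_{\le d}(I)=J+R\cdot\Sat(I)_d$, I must find finitely many homogeneous $\xi_1,\ldots,\xi_r\in\Sat(I)_d$ with $\Sat(I)_d\subset J_d+A\xi_1+\cdots+A\xi_r$. This reduces the theorem to the assertion that $\Sat(I)_d/J_d=\Sigma(J_d)/J_d$ is a finitely generated $A$-module; since it is $t$-torsion, this is equivalent to its being a finite-dimensional $\bC$-vector space.

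My strategy to prove this finite-dimensionality is to apply Proposition~\ref{prop:nak} to the $A$-submodule $M=J_d+A\xi_1+\cdots+A\xi_r$ of $\Pi=R_d$ for suitably chosen $\xi_j\in\Sigma(J_d)$: once (a), (b), (c) of the proposition are verified for $M$, it gives $M=\Sigma(M)=\Sigma(J_d)$, whence the $\xi_j$ produce exactly the generators needed. Condition (a), completeness, holds for $J_d=\sum_{i=1}^s R_{d-d_i}g_i$ because each $R_{d-d_i}$ is complete (a product of copies of $A$), multiplication by $g_i$ is a continuous $A$-linear map, and a finite sum of images of complete modules can be shown complete by a direct rearrangement of the defining series; this persists when finitely many $A\xi_j$ are adjoined.

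The crux is condition (c), the inclusion $\Sigma(M)\subset\bar M$. This is where $J_k=\Sat(I)_k$ for $k<d$ must enter in an essential way. Given $f\in\Sigma(J_d)$ with $t^n f=\sum h_ig_i\in J_d$, I would iteratively lift partial relations among the $g_i$ modulo higher and higher powers of $t$, producing a Cauchy sequence of elements of $J_d$ converging $t$-adically to $f$; the lower-degree saturation of $J$ provides exactly the ``room'' to perform each such lift. This is the step I expect to require the most care, because the non-Noetherian nature of $R$ rules out direct appeals to classical Artin--Rees style arguments, and it is precisely the built-in saturation of $J$ below degree $d$ that enables the approximation. Once (a) and (c) are in hand, the $\xi_j$ can be chosen to secure condition (b) as well, and Proposition~\ref{prop:nak} closes the induction.
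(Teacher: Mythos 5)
Your overall architecture matches the paper's: induct on $d$, replace $I$ by $J=\Sat_{\le d-1}(I)$ so that saturation in degrees $<d$ is already absorbed, reduce to showing $\Sigma(J_d)/J_d$ is finite-dimensional over $\bC$, and close with Proposition~\ref{prop:nak}. But the two hypotheses of that proposition that carry all the content, namely (b) and (c), are exactly the ones you do not establish, and your claim that condition (b) ``can be secured by choosing the $\xi_j$'' is circular. Since each $\xi_j$ lies in $\Sigma(J_d)$, the module $M=J_d+A\xi_1+\cdots+A\xi_r$ satisfies $J_d\subset M\subset\Sigma_N(J_d)$ for some $N$, with $M/J_d$ finite-dimensional; hence $\Sigma_n(M)/M$ is finite-dimensional for all $n$ if and only if $\Sigma_n(J_d)/J_d$ is finite-dimensional for all $n$. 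The latter is essentially the statement you set out to prove, so no choice of finitely many $\xi_j$ can create condition (b); it must be supplied by a separate mechanism.

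The missing mechanism in the paper is a finite-dimensional syzygy space. Writing $J=(f_1,\ldots,f_r)$, one forms the degree-$d$ part of the syzygy module $E$ of $(f_1(0),\ldots,f_r(0))$ modulo $S_+E$, call it $\ol{E}$, and shows it is finite-dimensional by descending to the finitely many variables actually appearing in the $f_i(0)$. The Key Lemma then says: a relation $(g_1,\ldots,g_r)$ with $t^k\mid\sum g_if_i$ whose class in $\ol{E}$ vanishes can be rewritten so that $(\sum g_if_i)/t^k$ already lies in $\Sigma_{k-1}(J_d)$ --- and it is precisely here that the inductive hypothesis (saturation of $J$ below degree $d$) is used, since the lower-degree syzygies produce elements of $\Sat(I)$ of degree $<d$ which must already lie in $J$. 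This lemma simultaneously yields (b), because each quotient $\Sigma_k(J_d)/\Sigma_{k-1}(J_d)$ becomes a quotient of a subspace $Z_k$ of $\ol{E}$, and, after observing that the descending chain $Z_k\subset\ol{E}$ stabilizes at some $\ell$, yields (c) for $M=\Sigma_\ell(J_d)$ rather than for $J_d$ itself. Your sketch of (c) (``iteratively lift partial relations'') points in the right direction, but without the finite-dimensional $\ol{E}$ to control the lifting, and without any argument for (b), the proof is incomplete at its crux.
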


Fix $I$ and $d$ as in the theorem. We may assume, by induction, that the theorem holds for smaller values of $d$, and so we may replace $I$ by $\Sat_{\le d-1}(I)$; this is still finitely generated by the inductive hypothesis. Thus if $f \in \Sat(I)$ has degree $<d$ then $f$ already belongs to $I$.

Let $f_1, \ldots, f_r$ be homogeneous generators of $I$. Let $X_k$ be the set of all tuples $(g_1, \ldots, g_r) \in R^r$ such that $\deg(g_if_i)=d$ for all $i$ and $t^k \mid g_1f_1+\cdots+g_rf_r$. Note that $X_{k+1} \subset X_k$. Define
\begin{displaymath}
\pi_k \colon X_k \to R_d, \qquad (g_1, \ldots, g_r) \mapsto \frac{g_1f_1+\cdots+g_rf_r}{t^k}.
\end{displaymath}
and put $Y_k=\im(\pi_k)$. Note that $Y_k$ is exactly $\Sigma_k(I_d)$, that is, the set of $g \in S_d$ such that $t^k g \in I$.

Let $E$ be the set of all tuples $(g_1, \ldots, g_r) \in S^r$ such that $g_1 f_1(0)+\cdots g_r f_r(0)=0$. Here $f_i(0) \in S$ is the result of substituting~0 for $t$ in $f_i$. Thus $E$ is just the syzygy module for $(f_1(0), \ldots, f_r(0))$. Let $\ol{E}=E_d/(E_d \cap S_+ E)$, i.e., the space of degree $d$ generators for $E$. Here $E_d$ consists of tuples $(g_1, \ldots, g_r) \in E$ such that $\deg(g_i f_i)=d$.

\begin{lemma}
$\ol{E}$ is a finite dimensional vector space.
\end{lemma}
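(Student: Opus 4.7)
My approach is a routine Noetherian base-change reduction, exploiting the fact that, unlike in $R$, each element of the polynomial ring $S$ uses only finitely many variables.

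First, each $f_i(0)$ lies in $S$ and hence is a polynomial in finitely many of the variables of $S$. Jointly, $f_1(0), \ldots, f_r(0)$ use only finitely many variables, so they lie in a graded polynomial subring $S_0 \subset S$ in finitely many variables. Partitioning the variable set of $S$ accordingly gives $S = S_0 \otimes_{\bC} S'$ for some polynomial ring $S'$, so $S$ is free, and in particular flat, over $S_0$.

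Next, let $E_0 \subset S_0^r$ be the module of syzygies of $(f_1(0), \ldots, f_r(0))$ over $S_0$, where the $i$th factor of $S_0^r$ is given the degree shift $\deg f_i$. Since $S_0$ is a Noetherian polynomial ring and $S_0^r$ is a finitely generated graded $S_0$-module, $E_0$ is finitely generated. Applying the exact functor $-\otimes_{S_0} S$ to the graded short exact sequence
\begin{displaymath}
0 \to E_0 \to S_0^r \to (f_1(0), \ldots, f_r(0)) S_0 \to 0
\end{displaymath}
identifies $E_0 \otimes_{S_0} S$ with the kernel of the map $S^r \to S$ sending the $i$th standard basis vector to $f_i(0)$, namely $E$ itself. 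Hence $E$ is generated as an $S$-module by some finite collection of homogeneous elements $e_1, \ldots, e_m$.

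Finally, $E/S_+ E$ is a graded $\bC = S/S_+$-vector space spanned by the images of $e_1, \ldots, e_m$, so it is finite-dimensional. The lemma follows because $\ol{E} = E_d/(E_d \cap S_+ E)$ is exactly the degree-$d$ graded piece of $E/S_+ E$. I anticipate no genuine obstacle here; the only delicate point is the very first step, which fails if $S$ is replaced by $R$ and is precisely why the lemma is formulated in terms of $f_i(0)$ rather than $f_i$.
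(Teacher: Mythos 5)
Your proof is correct. The core reduction is the same as the paper's: both pass to the finite-variable polynomial subring $S_0$ (the paper calls it $S'$) containing all variables of $f_1(0),\ldots,f_r(0)$ and invoke Noetherianity there. Where you differ is in how the finiteness is transported back to $S$: you tensor the defining exact sequence of the syzygy module $E_0$ along the flat (indeed free) extension $S_0 \to S$ to conclude $E \cong E_0 \otimes_{S_0} S$, so that $E$ is a finitely generated $S$-module and $E/S_+E$ is finite dimensional outright. The paper instead only shows that the natural map $\ol{E}_0 \to \ol{E}$ is surjective, by a hands-on computation: it expands a degree-$d$ syzygy $(g_1,\ldots,g_r)$ in monomials $x^e$ in the outside variables, observes that each coefficient tuple $(g_{1,e},\ldots,g_{r,e})$ is itself a syzygy, and hence that all terms with $e \ne 0$ die in $\ol{E}$. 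Your flat base change buys the stronger conclusion (finite generation of $E$ itself, hence finite dimensionality of every graded piece of $E/S_+E$, not just the degree-$d$ one), at the cost of invoking flatness; the paper's argument is more elementary and proves exactly what is needed. Both are complete; the one point you should make explicit, and which your tensor-the-sequence formulation does handle via flatness of $S$ over $S_0$, is that the ideal $(f_1(0),\ldots,f_r(0))S_0$ stays injected in $S$ after base change, so that the tensored kernel really is the kernel over $S$.
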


\begin{proof}
Let $A$ be the set of variables appearing in $f_1(0), \ldots, f_r(0)$, which is finite. Let $S' \subset S$ be the polynomial ring in the $A$ variables, and define $E'$ like $E$, but using $S'$ instead of $S$. Since $S'$ is noetherian, it follows that $E'$ is a finitely generated module, and so $\ol{E}'=E'_d/(E'_d \cap S_+' E')$ is finite dimensional.

We have a natural inclusion $E' \to E$. We claim that the induced map $\ol{E}' \to \ol{E}$ is surjective, which will complete the proof. Thus suppose $(g_1, \ldots, g_r)$ is a degree $d$ element of $E$. Write $g_i = \sum_e g_{i,e} x^e$, where the sum is over all monomials in variables not in $A$, and $g_{i,e} \in S'$. Each tuple $(g_{1,e}, \ldots, g_{r,e})$ belongs to $E$ and so if $e \ne 0$ then $(x^e g_{1,e}, \ldots, x^e g_{r,e})$ belongs to $S_+ E$. Hence $(g_1, \ldots, g_r)$ and $(g_{1,0}, \ldots, g_{r,0})$ are equal in $\ol{E}$. Since the latter belongs to $E'$, the result follows.
\end{proof}

If $(g_1, \ldots, g_r)$ belongs to $X_1$ then $g_1 f_1 + \cdots + g_r f_r$ is divisible by $t$, which exactly means that $g_1(0)f_1(0)+\cdots+g_r(0) f_r(0)=0$, i.e., $(g_1(0), \ldots, g_r(0))$ belongs to $E_d$. Define $\rho \colon X_1 \to \ol{E}$ by taking $(g_1, \ldots, g_r)$ to the element represented by $(g_1(0), \ldots, g_r(0))$. Put $Z_k=\rho(X_k)$. Since $X_{k+1} \subset X_k$, we have $Z_{k+1} \subset Z_k$.

\begin{lemma}[Key Lemma]
Let $(g_1, \ldots, g_r) \in X_k$ with $k \ge 1$. Suppose that $\rho(g_1, \ldots, g_r)=0$. Then $\pi_k(g_1, \ldots, g_r) \in Y_{k-1}$.
\end{lemma}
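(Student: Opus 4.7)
The plan is to use the hypothesis $\rho(g_1,\ldots,g_r)=0$ to rewrite $t^{k-1}\pi_k(g_1,\ldots,g_r)$ as an element of $I$, which will immediately exhibit $\pi_k(g_1,\ldots,g_r)$ as an element of $\Sigma_{k-1}(I_d)=Y_{k-1}$. The crucial ingredient is the inductive reduction already in force: after replacing $I$ by $\Sat_{\le d-1}(I)$, every element of $\Sat(I)$ of degree strictly less than $d$ already lies in $I$.

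First I would unpack the hypothesis on $\rho$. Since the class of $(g_1(0),\ldots,g_r(0))$ in $\ol{E}$ is zero, this tuple lies in $E_d\cap S_+ E$. Using homogeneity, we may write
\[
(g_1(0),\ldots,g_r(0)) \;=\; \sum_{j} s_j\,(e_{j,1},\ldots,e_{j,r}),
\]
where each $s_j\in S_+$ is homogeneous of positive degree $a_j$ and each $(e_{j,1},\ldots,e_{j,r})\in E$ is homogeneous with $\deg(e_{j,i}f_i)=d-a_j$. Since $g_i(0)-\sum_j s_j e_{j,i}=0$ in $S$, the element $g_i-\sum_j s_j e_{j,i}$ is divisible by $t$ in $R$, so there exist homogeneous $g'_i\in R$ with $\deg(g'_i f_i)=d$ and $g_i=\sum_j s_j e_{j,i}+t g'_i$.

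Substituting this decomposition into $\sum_i g_i f_i$ gives
\[
\sum_i g_i f_i \;=\; \sum_j s_j\Bigl(\sum_i e_{j,i}f_i\Bigr) + t\sum_i g'_i f_i.
\]
For each fixed $j$, the syzygy condition $\sum_i e_{j,i}f_i(0)=0$ yields $t\mid\sum_i e_{j,i}f_i$, so I can set $\tilde{f}_j := t^{-1}\sum_i e_{j,i}f_i\in R$. Then $\tilde{f}_j$ is homogeneous of degree $d-a_j<d$, and $t\tilde{f}_j=\sum_i e_{j,i}f_i\in I$ shows $\tilde{f}_j\in\Sat(I)$. By the inductive reduction, $\tilde{f}_j\in I$. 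Therefore
\[
\sum_i g_i f_i \;=\; t\Bigl(\sum_j s_j\tilde{f}_j+\sum_i g'_i f_i\Bigr),
\]
and dividing by $t^k$ gives $t^{k-1}\pi_k(g_1,\ldots,g_r)=\sum_j s_j\tilde{f}_j+\sum_i g'_i f_i\in I$, which exhibits $\pi_k(g_1,\ldots,g_r)\in Y_{k-1}$.

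The only substantive obstacle is ensuring $\deg\tilde{f}_j<d$ so that the inductive reduction applies to each $\tilde{f}_j$; this comes for free because the $s_j$ have strictly positive degree, forcing $d-a_j<d$. Everything else is routine bookkeeping combining the lift $g_i=\sum_j s_j e_{j,i}+t g'_i$ (produced by $\rho=0$) with the $t$-divisibility of $\sum_i e_{j,i}f_i$ (produced by the syzygy relation defining $E$).
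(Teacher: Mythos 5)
Your proof is correct and follows essentially the same route as the paper: decompose $(g_1(0),\ldots,g_r(0))$ as an $S_+$-combination of lower-degree syzygies, use the $t$-divisibility of $\sum_i e_{j,i}f_i$ together with the inductive reduction to place each $\tilde f_j$ in $I$, and conclude via $Y_{k-1}=\Sigma_{k-1}(I_d)$. The paper phrases the last step by exhibiting an explicit preimage under $\pi_{k-1}$, but this is only a cosmetic difference.
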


\begin{proof}
Since $(g_1(0), \ldots, g_r(0))$ maps to~0 in $\ol{E}$, we can find elements $(a_{1,i}, \ldots, a_{r,i}) \in E$ for $1 \le i \le n$ of degree $<d$ such that
\begin{displaymath}
(g_1(0), \ldots, g_r(0)) = \sum_{i=1}^n b_i \cdot (a_{1,i}, \ldots, a_{r,i})
\end{displaymath}
for some $b_i \in S$. Now, we have
\begin{displaymath}
\frac{g_1(0) f_1 + \cdots + g_r(0) f_r}{t} = \sum_{i=1}^n b_i \cdot \frac{a_{1,i} f_1 + \cdots + a_{r,i} f_r}{t}.
\end{displaymath}
Since $a_{1,i} f_1 + \cdots + a_{r,i} f_r$ is divisible by $t$, its quotient by $t$ belongs to $\Sat(I)$. It also has degree $<d$, and so it belongs to $I$ by our initial setup. Thus we can write
\begin{displaymath}
b_i \cdot \frac{a_{1,i} f_1 + \cdots + a_{r,i} f_r}{t} = c_{1,i} f_1 + \cdots + c_{r,i} f_r
\end{displaymath}
for elements $c_{i,j} \in R$. Letting $c_i=c_{i,1} + \cdots + c_{i,r}$, we thus have
\begin{displaymath}
\frac{g_1(0)f_1+\cdots+g_r(0)}{t} = c_1f_1 + \cdots + c_r f_r.
\end{displaymath}
Now, write $g_i=g_i(0)+t g_i'$. Then
\begin{align*}
\pi_k(g_1, \ldots, g_r)
&= \frac{g_1 f_1+\cdots+ g_r f_r}{t^k} \\
&= \frac{\frac{g_1(0) f_1+\cdots+g_r(0) f_r}{t} + g_1' f_1+\cdots+g_r' f_r}{t^{k-1}} \\
&= \frac{(g_1'+c_1) f_1 + \cdots + (g_r'+c_r) f_r}{t^{k-1}} \\
&= \pi_{k-1}(g_1'+c_1, \ldots, g_r'+c_r).
\end{align*}
This completes the proof.
\end{proof}

\begin{lemma}
$Y_k/Y_{k-1}$ is finite dimensional over $\bC$.
\end{lemma}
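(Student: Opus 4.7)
The plan is to combine the Key Lemma with the finite-dimensionality of $\bar{E}$ via a straightforward factorization argument. Consider the composite $\bC$-linear map
\begin{displaymath}
\phi \colon X_k \xrightarrow{\pi_k} Y_k \longrightarrow Y_k/Y_{k-1},
\end{displaymath}
in which the second arrow is the canonical quotient. Both $X_k$ and $\pi_k$ are manifestly $\bC$-linear (the defining conditions on $X_k$ are linear, and $\pi_k$ is linear in each $g_i$), and since $Y_k=\im(\pi_k)$ by definition, the map $\phi$ is surjective.

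The Key Lemma asserts precisely that any $(g_1,\ldots,g_r) \in X_k$ with $\rho(g_1,\ldots,g_r)=0$ satisfies $\pi_k(g_1,\ldots,g_r) \in Y_{k-1}$, that is, $\ker(\rho|_{X_k}) \subseteq \ker(\phi)$. Hence $\phi$ descends to a surjection
\begin{displaymath}
\bar{\phi} \colon X_k/\ker(\rho|_{X_k}) \longrightarrow Y_k/Y_{k-1}.
\end{displaymath}
By the first isomorphism theorem applied to $\rho|_{X_k}$, the source $X_k/\ker(\rho|_{X_k})$ is isomorphic to $Z_k=\rho(X_k)$, a $\bC$-subspace of $\bar E$. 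Since $\bar E$ is finite-dimensional by the preceding lemma, so is $Z_k$, and therefore so is its surjective image $Y_k/Y_{k-1}$.

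I do not anticipate any obstacle: the substantive work has already been absorbed into the Key Lemma, which is exactly the statement that a tuple whose reduction mod $t$ is a syzygy (i.e.\ an element of $\ker \rho$) can be modified by lower-degree corrections to witness its $\pi_k$-image as lying in $Y_{k-1}$. Given that, the present lemma reduces to checking $\bC$-linearity of $\pi_k$ and $\rho$ (immediate) and invoking the first isomorphism theorem together with the finite-dimensionality of $\bar E$.
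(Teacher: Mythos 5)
Your proof is correct and is essentially the paper's own argument: the paper's proof likewise notes that the Key Lemma gives a surjection $Z_k \cong X_k/\ker(\rho|_{X_k}) \to Y_k/Y_{k-1}$ and concludes from the finite-dimensionality of $Z_k \subseteq \ol{E}$. You have simply written out the factorization and the first isomorphism theorem in more detail.
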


\begin{proof}
The key lemma gives a surjection $Z_k \cong X_k/\ker(\rho \vert_{X_k}) \to Y_k/Y_{k-1}$, and $Z_k$ is finite dimensional.
\end{proof}

\begin{lemma}
Let $(g_1, \ldots, g_r) \in X_k$ with $k \ge 1$. Suppose that $\rho(g_1, \ldots, g_r) \in Z_{k+m}$. Then $\pi_k(g_1, \ldots, g_r) \in Y_{k-1}+t^m Y_{k+m}$.
\end{lemma}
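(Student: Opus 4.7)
The plan is to exploit the hypothesis $\rho(g_1,\ldots,g_r) \in Z_{k+m}$ to split $(g_1,\ldots,g_r)$ into a piece governed by the Key Lemma and a piece that lands in $t^m Y_{k+m}$. By definition of $Z_{k+m}=\rho(X_{k+m})$, there exists a tuple $(h_1,\ldots,h_r) \in X_{k+m}$ with $\rho(h_1,\ldots,h_r)=\rho(g_1,\ldots,g_r)$.

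Since $X_{k+m} \subset X_k$, the difference $(g_1-h_1,\ldots,g_r-h_r)$ still lies in $X_k$, and by linearity of $\rho$ we have $\rho(g_1-h_1,\ldots,g_r-h_r)=0$. The Key Lemma therefore gives
\begin{displaymath}
\pi_k(g_1-h_1,\ldots,g_r-h_r) \in Y_{k-1}.
\end{displaymath}
On the other hand, $(h_1,\ldots,h_r) \in X_{k+m}$ means that $t^{k+m}$ divides $h_1 f_1 + \cdots + h_r f_r$, so
\begin{displaymath}
\pi_k(h_1,\ldots,h_r) = \frac{h_1 f_1+\cdots+h_r f_r}{t^k} = t^m \cdot \frac{h_1 f_1+\cdots+h_r f_r}{t^{k+m}} = t^m \pi_{k+m}(h_1,\ldots,h_r) \in t^m Y_{k+m}.
\end{displaymath}

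Adding these two expressions and using linearity of $\pi_k$, we conclude
\begin{displaymath}
\pi_k(g_1,\ldots,g_r) = \pi_k(g_1-h_1,\ldots,g_r-h_r) + \pi_k(h_1,\ldots,h_r) \in Y_{k-1} + t^m Y_{k+m},
\end{displaymath}
which is the desired conclusion. There is no real obstacle here: the only thing to verify carefully is that $\pi_k$ and $\rho$ are both $\bC$-linear (so that subtracting $(h_1,\ldots,h_r)$ makes sense and respects the relevant containments) and that all degree/divisibility bookkeeping is consistent, which is immediate from the definitions.
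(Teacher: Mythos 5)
Your proof is correct and follows the same route as the paper: pick $(h_1,\ldots,h_r) \in X_{k+m}$ with the same image under $\rho$, apply the Key Lemma to the difference, and observe $\pi_k(h_1,\ldots,h_r)=t^m\pi_{k+m}(h_1,\ldots,h_r)$. You are in fact slightly more explicit than the paper in verifying that the difference lies in $X_k$ with $\rho$-image zero before invoking the Key Lemma.
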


\begin{proof}
Let $(h_1, \ldots, h_r) \in X_{k+m}$ be such that $\rho(g_1, \ldots, g_r)=\rho(h_1, \ldots, h_r)$. We have
\begin{displaymath}
\pi_k(h_1, \ldots, h_r)=t^m \pi_{k+m}(h_1, \ldots, h_r) \in t^m Y_{k+m}.
\end{displaymath}
We have
\begin{displaymath}
\pi_k(g_1, \ldots, g_r) = \pi_k(g_1-h_1, \ldots, g_r-h_r) + \pi_k(h_1, \ldots, h_r).
\end{displaymath}
The key lemma shows that the first term belongs to $Y_{k-1}$, while we have just seen that the second belongs to $t^m Y_{k+m}$.
\end{proof}

\begin{proof}[Proof of Theorem~\ref{thm:sat}]
The sets $Z_k$ form a descending chain in the finite dimensional space $\ol{E}$, and so they stabilize. Let $\ell$ be the point at which they stabilize, so that $Z_{\ell+n}=Z_{\ell}$ for all $n$. We claim that $Y_k=Y_{\ell}$ for all $k \ge \ell$, which will establish the result: indeed, $\Sat_{\le d}(I)$ will then be generated by $I$ and $Y_k$, and since $Y_k/Y_0=Y_k/I_d$ is finite dimensional, we are only adding finitely many generators to $I$. We prove this by applying Proposition~\ref{prop:nak} with $M=Y_{\ell}$. We check the three axioms:

(a) Suppose $x_0,x_1,\ldots \in Y_{\ell}$. We can thus write $x_i=t^{-\ell}(g_{1,i} f_1+\cdots+g_{r,i} f_r)$, and so $\sum_{i \ge 0} x_i t^i = t^{-\ell}(g_1f_1+\cdots+g_r f_r)$ where $g_j=\sum_{i \ge 0} g_{j,i} t^i$. This clearly belongs to $Y_{\ell}$.

(b) We have $\Sigma_k(Y_{\ell})=Y_{k+\ell}$, and we have already seen that $Y_{k+\ell}/Y_{\ell}$ is finite dimensional for all $k$.

(c) Let $(g_1, \ldots, g_r) \in X_k$ with $k>\ell$. Then $\rho(g_1, \ldots, g_r) \in Z_{k+m}$ for all $m$. Thus, by the previous lemma, we have $\pi_k(g_1,\ldots,g_r) \in Y_{k-1}+t^m Y_{k+m}$. We thus see that $Y_k \subset Y_{k-1}+t^m Y_{k+m} \subset Y_{k-1}+t^m R$. Applying this inductively, we find $Y_k \subset Y_{\ell}+t^m R$, for any $m \ge 0$. Thus $Y_k$ is contained in the $t$-adic closure of $Y_{\ell}$, for all $k \ge \ell$.

Proposition~\ref{prop:nak} now applies, and shows that $Y_{\ell}$ is saturated. Thus $Y_k=Y_{\ell}$ for all $k \ge \ell$, which establishes the result.
\end{proof}

\section{Proof of the main theorem} \label{s:mainthm}

Before proving the main theorem, we need the following simple result on strength.

\begin{proposition} \label{prop:str}
Let $f \in \cR$ be homogeneous. Then the following are equivalent:
\begin{enumerate}
\item $f$ has finite strength.
\item The ideal of $\cR$ generated by the partial derivatives of $f$ is contained in an ideal generated by finitely many homogeneous elements of positive degrees.
\end{enumerate}
The same statement holds for $\cR^+$ and $\cR^0$.
\end{proposition}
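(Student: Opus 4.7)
The plan is to handle (a) $\Rightarrow$ (b) by a Leibniz-rule computation and (b) $\Rightarrow$ (a) via Euler's identity, combined with a coefficient-wise check that certain formal sums make sense in $\cR$.

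For (a) $\Rightarrow$ (b), suppose $f = \sum_{i=1}^n g_i h_i$ with $g_i, h_i$ homogeneous of positive degree. The Leibniz rule gives $\partial f/\partial x_k = \sum_i \bigl((\partial g_i/\partial x_k) h_i + g_i (\partial h_i/\partial x_k)\bigr)$ for every $k$, so the ideal generated by the partials sits inside the finitely generated positive-degree ideal $(g_1, h_1, \ldots, g_n, h_n)$. The same computation works verbatim in $\cR^+$ and $\cR^0$.

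For (b) $\Rightarrow$ (a), set $d := \deg(f)$ and assume $d \ge 1$ (the case $d = 0$ is only compatible with (a) when $f = 0$, and is not needed for the applications). Let $J = (a_1, \ldots, a_m)$ be a finitely generated positive-degree ideal containing every partial of $f$. By homogeneity we may restrict to those $a_j$ with $\deg(a_j) \le d - 1$, since higher-degree generators cannot contribute to a degree-$(d-1)$ element. For each $k \ge 1$, pick a homogeneous expression $\partial f/\partial x_k = \sum_j b_{kj} a_j$ with $\deg(b_{kj}) = d - 1 - \deg(a_j)$. Euler's identity (valid in characteristic zero, where dividing by $d$ is allowed) reads $d \cdot f = \sum_{k \ge 1} x_k \cdot \partial f/\partial x_k$; swapping the order of summation formally yields $d \cdot f = \sum_j c_j a_j$, where $c_j := \sum_{k \ge 1} x_k b_{kj}$. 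It suffices to verify that each $c_j$ defines a bona fide homogeneous element of $\cR_+$.

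The sum defining $c_j$ has infinitely many terms, but every summand is homogeneous of the same degree $d - \deg(a_j) \ge 1$, and for any monomial $x^\beta$ of this degree, the coefficient of $x^\beta$ in $c_j$ is obtained by summing, over the finitely many $k$ with $\beta_k \ge 1$, the coefficient of $x^\beta/x_k$ in $b_{kj}$. Hence $c_j \in \cR_+$ is well-defined, and $f = \sum_j (c_j/d) a_j$ is the desired finite-strength expression. The same argument transfers to $\cR^+$ and $\cR^0$, because the coefficient-wise formula preserves the subrings with $A$-valued (resp.\ $\bC$-valued) coefficients. The principal (mild) obstacle is precisely this coefficient-wise check that $c_j$ makes sense in the inverse limit $\cR$; the rest is formal manipulation.
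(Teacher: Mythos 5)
Your proof is correct and follows essentially the same route as the paper: the Leibniz rule for (a)~$\Rightarrow$~(b) and Euler's identity for (b)~$\Rightarrow$~(a), with $h_j = \tfrac{1}{d}\sum_{k\ge 1} x_k b_{kj}$. Your explicit coefficient-wise verification that these infinite sums define genuine elements of $\cR_+$ (and of $\cR^+_+$, $\cR^0_+$) is a point the paper leaves implicit, and it is exactly the step that fails for $\cR^{\flat}$, as the paper's subsequent remark notes.
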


\begin{proof}
Suppose (a) holds, and write $f=\sum_{j=1}^n g_j h_j$, where each $g_j$ and $h_j$ is homogeneous of positive degree. Letting $\partial_i=\frac{\partial}{\partial x_i}$, we have
\begin{displaymath}
\partial_i(f) = \sum_{j=1}^n (\partial_i(g_j) h_j + g_j \partial_i(h_j)) \in (g_1, \ldots, g_n, h_1, \ldots, h_n).
\end{displaymath}
Thus (b) holds.

Now suppose (b) holds. Let $g_1, \ldots, g_n$ be positive degree homogeneous elements such that $\partial_i f \in (g_1, \ldots, g_n)$ for all $i$. Write $\partial_i f = \sum_{j=1}^n h_{i,j} g_j$. Then by Euler's identity, we have
\begin{displaymath}
f = \frac{1}{d} \sum_{i \ge 1} x_i \partial_i f = \sum_{j=1}^n g_j h_j,
\end{displaymath}
where $h_j = \tfrac{1}{d} \sum_{i \ge 1} x_i h_{i,j}$ and $d=\deg(f)$, and so $f$ has strength $\le n$.
\end{proof}

\begin{remark}
The proof given above for (a) $\implies$ (b) is valid in $\cR^{\flat}$. The proof of the converse is not valid in $\cR^{\flat}$, though: the problem is that there is no apparent reason for $\sum_{i \ge 1} x_i h_{i,j}$ to have bounded coefficients. However, the converse direction still holds in $\cR^{\flat}$, and can be proved using a variant of our next argument.
\end{remark}

\begin{proof}[Proof of Theorem~\ref{mainthm2}]
Let $f \in \cR^{\flat}$ be given such that $f$ has finite strength in $\cR$. We show that $f$ has finite strength in $\cR^{\flat}$. We may as well scale $f$ by a power of $t$ and assume that $f \in \cR^+$. We will in fact show that $f$ has finite strength in $\cR^+$.

Let $I \subset \cR^+$ be the ideal generated by the partial derivatives of $f$. Then $I\cR$ is the ideal of $\cR$ generated by the partial derivatives of $f$. Since $f$ has finite strength in $\cR$,  Proposition~\ref{prop:str} implies that $I\cR$ is contained in a finitely generated non-unital ideal. Since $\cR$ is a polynomial ring, it follows that $I\cR$ has finite height (Proposition~\ref{prop:finhtgen}). By Proposition~\ref{prop:htbd1}, we see that $I\cR^{\flat}$ has finite height. Since $\cR^{\flat}$ is a polynomial ring, it follows from Proposition~\ref{prop:finhtgen} that $I\cR^{\flat}$ is contained in a finitely generated non-unital ideal $J'=(g_1, \ldots, g_r)$. Since $I\cR^{\flat}$ is homogeneous, we can assume that the $g_i$'s are homogeneous of positive degree. Scale each $g_i$ by a power of $t$ if necessary so that $g_i \in \cR^+$.

Now, $I$ is contained in the contraction of $J'$ to $\cR^+$, which is exactly the saturation of the ideal $J$ of $\cR^+$ generated by $g_1, \ldots, g_r$. Thus $I \subset \Sat(J)$. If $f$ has degree $d$ then $I$ is generated by elements of degree $d-1$, and so we have $I \subset \Sat_{\le d-1}(J)$. This is finitely generated by Theorem~\ref{thm:sat}; note that, in the notation of \S \ref{s:sat}, if $S=\cR^0$ then $R \cong \cR^+$. Since $I$ has no non-zero degree~0 elements, the same is true for $\Sat_{\le d-1}(J)$. Thus $f$ has finite strength in $\cR^+$ by Proposition~\ref{prop:str}.
\end{proof}

\section{Heights in $\cR^+$} \label{s:Rplus}

We now prove some additional results about heights in the ring $\cR^+$. In what follows, we let $B_n=(\cR^+_{>n})_{(t)}$ be the localization of $\cR^+_{>n}$ at the prime ideal $(t)$. We note that these rings are all isomorphic to each other.

\begin{proposition} \label{prop:zerocontract2}
Let $I$ be an ideal of $\cR^+$ of finite height. Then $I \cap \cR^+_{>n} \subset t \cR^+_{>n}$ for $n \gg 0$.
\end{proposition}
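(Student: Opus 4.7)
The plan is to reduce to the already-established Proposition~\ref{prop:zerocontract} by passing either to $\cR^0 = \cR^+/(t)$ or to $\cR^{\flat} = \cR^+[1/t]$, depending on a case split. Let $c = \hgt_{\cR^+}(I) < \infty$ and fix a prime $\fp$ of $\cR^+$ of height $c$ with $I \subset \fp$. Since $I \cap \cR^+_{>n} \subset \fp \cap \cR^+_{>n}$, it suffices to prove $\fp \cap \cR^+_{>n} \subset t\cR^+_{>n}$ for $n \gg 0$. Before splitting cases, I would first record the elementary purity fact that $t\cR^+ \cap \cR^+_{>n} = t\cR^+_{>n}$: if $f = tg$ with $g \in \cR^+$ and $f$ involves only the variables $x_{>n}$, then reading coefficients in the domain $A$ forces $g \in \cR^+_{>n}$. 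Consequently the reduction $\pi \colon \cR^+ \to \cR^0$ sends $\cR^+_{>n}$ surjectively onto $\cR^0_{>n}$ with kernel $t\cR^+_{>n}$ on restriction.

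If $t \in \fp$, set $\fq = \fp/(t)$, a prime of $\cR^0$. Any strict chain of primes in $\cR^0$ ending at $\fq$ lifts to a chain of primes of $\cR^+$ containing $(t)$ and ending at $\fp$, so $\hgt_{\cR^0}(\fq) \le c < \infty$. Proposition~\ref{prop:zerocontract} applied in $\cR^0$ then gives $\fq \cap \cR^0_{>n} = 0$ for $n \gg 0$. For $f \in \fp \cap \cR^+_{>n}$, the image $\pi(f)$ lies in $\fq \cap \cR^0_{>n}$ and is therefore zero, so $f \in t\cR^+_{>n}$ by the purity observation. If instead $t \notin \fp$, then $\fp$ survives in the localization $\cR^{\flat} = \cR^+[1/t]$, producing a prime $\fp\cR^{\flat}$ of the same height $c$ by Proposition~\ref{prop:localht}. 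Proposition~\ref{prop:zerocontract} applied in $\cR^{\flat}$ gives $\fp\cR^{\flat} \cap \cR^{\flat}_{>n} = 0$ for $n \gg 0$; since $\fp \cap \cR^+_{>n}$ embeds into this intersection, it is itself zero, which trivially lies in $t\cR^+_{>n}$.

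I do not expect a serious obstacle here. Once the dichotomy $t \in \fp$ versus $t \notin \fp$ is drawn, each case reduces to Proposition~\ref{prop:zerocontract} after a single standard maneuver: quotienting by $(t)$ together with the height comparison $\hgt_{\cR^0}(\fp/(t)) \le \hgt_{\cR^+}(\fp)$ in the first case, and inverting $t$ together with Proposition~\ref{prop:localht} in the second. The only small point worth flagging is the purity equation $t\cR^+ \cap \cR^+_{>n} = t\cR^+_{>n}$; this is what lets the $\cR^0$ argument conclude with containment in $t\cR^+_{>n}$ rather than merely in $t\cR^+$, and it is precisely the feature that makes the $\cR^+$-version $t$-tilted rather than zero as in Proposition~\ref{prop:zerocontract}.
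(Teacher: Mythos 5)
Your proposal is correct and follows essentially the same route as the paper: reduce to the case of a prime $\fp$, then split on whether $t \in \fp$ (pass to $\cR^0 = \cR^+/(t)$ and apply Proposition~\ref{prop:zerocontract} there) or $t \notin \fp$ (pass to $\cR^{\flat} = \cR^+[1/t]$ and apply Proposition~\ref{prop:zerocontract} there). Your explicit verification of the purity equation $t\cR^+ \cap \cR^+_{>n} = t\cR^+_{>n}$ is a detail the paper leaves implicit, and it is a worthwhile addition.
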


\begin{proof}
Since $I$ is contained in a finite height prime, it suffices to treat the case where $I=\fp$ is itself prime. First suppose $t \in \fp$, and let $\ol{\fp}$ be the extension of $\fp$ to $\cR^0=\cR^+/t\cR^+$. Then $\ol{\fp}$ is prime and has finite height, as a chain of primes below $\ol{\fp}$ would give one below $\fp$, and thus has length bounded by the height of $\fp$. Thus by Proposition~\ref{prop:zerocontract}, we have $\ol{\fp} \cap \cR^0_{>n}=0$ for $n \gg 0$. This gives $\fp \cap \cR^+_{>n} \subset t \cR^+_{>n}$, as required.

Next suppose $t \not\in \fp$. Then $\fp$ is the contraction of a prime $\fq$ of $\cR^{\flat}=\cR^+[1/t]$, necessarily of finite height by Proposition~\ref{prop:localht}. Appealing to Proposition~\ref{prop:zerocontract} again, we have $\fq \cap \cR^{\flat}_{>n}=0$ for $n \gg 0$. This implies $\fp \cap \cR^+_{>n}=0$ for $n \gg 0$.
\end{proof}

\begin{corollary}
Let $\fp$ be a finite height prime of $\cR^+$. Then $\fp$ is the contraction of a prime of $B_n[x_1, \ldots, x_n]$ for all sufficiently large $n$.
\end{corollary}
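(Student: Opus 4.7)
The plan is to reduce the corollary to a direct application of Proposition~\ref{prop:zerocontract2} together with the standard correspondence between primes of a ring and primes of a localization. First I would identify $B_n[x_1,\ldots,x_n]$ with a localization of $\cR^+$. Using the isomorphism $\cR^+ = \cR^+_{>n}[x_1,\ldots,x_n]$ and the fact that $B_n = (\cR^+_{>n})_{(t)} = S^{-1}\cR^+_{>n}$ with $S = \cR^+_{>n}\setminus t\cR^+_{>n}$, we have
\[
B_n[x_1,\ldots,x_n] \;=\; S^{-1}\cR^+_{>n}[x_1,\ldots,x_n] \;=\; S^{-1}\cR^+,
\]
where $S$ is viewed as a multiplicative subset of $\cR^+$ via the inclusion $\cR^+_{>n}\hookrightarrow \cR^+$. (It is multiplicative because $(t)$ is a prime ideal of $\cR^+_{>n}$.)

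Next I would invoke the standard bijection recalled just before Proposition~\ref{prop:localht}: primes of $S^{-1}\cR^+$ correspond bijectively, via contraction, to primes of $\cR^+$ disjoint from $S$. So to prove the corollary, it suffices to show that $\fp$ is disjoint from $S$ for all sufficiently large $n$, i.e.\ that $\fp\cap \cR^+_{>n}\subset t\cR^+_{>n}$ for all $n\gg 0$.

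But this is exactly the statement of Proposition~\ref{prop:zerocontract2}, which applies since $\fp$ has finite height. Picking $n$ that large, we get $\fp\cap S=\emptyset$, and therefore $\fp$ is the contraction of a (uniquely determined) prime of $B_n[x_1,\ldots,x_n]$, as required.

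Since Proposition~\ref{prop:zerocontract2} has already been proved and the localization bijection is standard, there is no substantive obstacle; the only thing to be careful about is the identification of $B_n[x_1,\ldots,x_n]$ as a localization of $\cR^+$ (not merely of $\cR^+_{>n}$), which is what makes the contraction statement meaningful.
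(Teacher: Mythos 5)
Your proof is correct and is exactly the argument the paper intends (the corollary is stated without proof immediately after Proposition~\ref{prop:zerocontract2}): identify $B_n[x_1,\ldots,x_n]$ with $S^{-1}\cR^+$ for $S=\cR^+_{>n}\setminus t\cR^+_{>n}$, note that $\fp\cap S=\emptyset$ for $n\gg 0$ by that proposition, and apply the standard prime correspondence for localizations. Your care in checking that the localization is of $\cR^+$ itself, via $\cR^+=\cR^+_{>n}[x_1,\ldots,x_n]$, is exactly the right point to make explicit.
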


\begin{proposition} \label{prop:B}
The ring $B_n$ is a DVR containing $\bC\lbb t \rbb$, and has $t$ for a uniformizer.
\end{proposition}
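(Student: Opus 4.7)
The plan is to exhibit an explicit $t$-adic valuation on $\cR^+_{>n}$, and then deduce that $B_n$ is a DVR by showing every non-zero element of $B_n$ is a unit times a power of $t$.

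First I would unpack the structure of $\cR^+_{>n}$. Every homogeneous degree-$d$ element $f$ has a unique expression $f = \sum_\alpha a_\alpha m_\alpha$, where $\{m_\alpha\}$ enumerates degree-$d$ monomials in $\{x_i\}_{i>n}$ and $a_\alpha \in A = \bC\lbb t \rbb$. For any non-zero $f$, define $v(f)$ to be the minimum of the $t$-adic valuations of the $a_\alpha$; this minimum exists in $\bZ_{\ge 0}$ because $\bZ_{\ge 0}$ is well-ordered. Extend this to inhomogeneous $f$ by taking the minimum over its finitely many homogeneous components. By construction, $f$ admits a unique factorization $f = t^{v(f)} h$ with $h \in \cR^+_{>n}$ and $h \notin t\cR^+_{>n}$: simply divide out the common power of $t$ from all coefficients.

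Next I would check that $(t)$ is prime in $\cR^+_{>n}$, which is immediate since the quotient $\cR^+_{>n}/t\cR^+_{>n}$ is canonically identified with $\cR^0_{>n}$, a polynomial ring over $\bC$ and hence a domain. Using this same fact, I would verify that $v$ is multiplicative: if $f = t^{v(f)} h_f$ and $g = t^{v(g)} h_g$ with $h_f, h_g \notin (t)$, then $fg = t^{v(f)+v(g)} h_f h_g$, and the reductions $\overline{h_f}, \overline{h_g}$ in $\cR^0_{>n}$ are non-zero, so $\overline{h_f h_g} \neq 0$ in the domain $\cR^0_{>n}$, whence $h_f h_g \notin (t)$.

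Now $B_n$ is a well-defined local ring with maximal ideal $(t)B_n$. Any non-zero element of $B_n$ has the form $g/f$ with $g \in \cR^+_{>n}$ non-zero and $f \notin (t)$; writing $g = t^{v(g)} h$ with $h \notin (t)$, this becomes $t^{v(g)}(h/f)$ where $h/f$ is a unit in $B_n$. Hence every non-zero element of $B_n$ is a power of $t$ times a unit, which is the standard characterization of a DVR with uniformizer $t$. Finally, $\bC\lbb t\rbb = A$ is contained in $\cR^+_{>n}$ in degree~$0$ and maps injectively into $B_n$ since $A$ is a domain and $tA$ is a proper ideal, giving the stated containment.

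The proof is essentially a routine unpacking of the $t$-adic structure of $\cR^+_{>n}$; the only conceptual point to be careful about is that $v$ is multiplicative, which reduces to the fact that $\cR^0_{>n}$ is a domain. There is no serious technical obstacle.
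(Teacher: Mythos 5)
Your proof is correct and follows essentially the same route as the paper: factor each nonzero element as $t^k$ times an element with some coefficient a unit, and observe that the latter becomes invertible in the localization. You are somewhat more careful than the paper in explicitly verifying that $(t)$ is prime (via the domain $\cR^0_{>n}$) and that the valuation is multiplicative, but the core argument is identical.
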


\begin{proof}
Let $f$ be a non-zero element of $B$. Then we can write $f=a/b$ where $a,b \in \cR^+_{>n}$ and $b \not\in t \cR^+$. Write $a=t^k a_0$ where $a_0 \not\in t\cR^+_{>n}$; note that $k$ is the minimal non-negative integer such that $t^k$ divides all coefficients of $a$. Then $f=t^k (a_0/b)$, and $a_0/b$ is a unit of $B_n$. Thus every non-zero element of $B_n$ has the form $ut^n$ for $u$ a unit, which proves the claim.
\end{proof}

\begin{proposition} \label{prop:cat}
Let $\fp \subset \fq$ be finite height primes of $\cR^+$. Then any two maximal chains of primes between $\fp$ and $\fq$ have the same length.
\end{proposition}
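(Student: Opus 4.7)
The plan is to reduce to the statement for a polynomial ring in finitely many variables over the DVR $B_n$, which is catenary by standard commutative algebra, and then transfer the conclusion back to $\cR^+$ via the localization correspondence. The only real work is organizing the choice of $n$ so that every prime in every maximal chain between $\fp$ and $\fq$ lies in the appropriate localization; this is where Proposition~\ref{prop:zerocontract2} enters.

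First I would invoke Proposition~\ref{prop:zerocontract2} applied to $\fq$ to choose $n$ large enough that $\fq \cap \cR^+_{>n} \subset t\cR^+_{>n}$. The crucial observation is that the same $n$ then automatically works for every prime $\fr$ with $\fp \subset \fr \subset \fq$, since $\fr \cap \cR^+_{>n} \subset \fq \cap \cR^+_{>n} \subset t\cR^+_{>n}$. Now $B_n[x_1,\ldots,x_n]$ is exactly the localization $S^{-1}\cR^+$ where $S = \cR^+_{>n} \setminus t\cR^+_{>n}$, so its primes correspond bijectively, preserving inclusions, to primes of $\cR^+$ disjoint from $S$, i.e., to primes meeting $\cR^+_{>n}$ inside $t\cR^+_{>n}$. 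By the choice of $n$, every prime sitting between $\fp$ and $\fq$ has this property, so extension and contraction give an inclusion-preserving bijection between the primes of $\cR^+$ between $\fp$ and $\fq$ and the primes of $B_n[x_1,\ldots,x_n]$ between the extensions $\fp'$ and $\fq'$. Proposition~\ref{prop:localht} tells us this bijection preserves heights, so maximal chains correspond to maximal chains of the same length.

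It remains to show $B_n[x_1,\ldots,x_n]$ is catenary. By Proposition~\ref{prop:B}, $B_n$ is a DVR, hence a regular Noetherian local ring of dimension~$1$. The finite-variable polynomial ring $B_n[x_1,\ldots,x_n]$ over a regular Noetherian ring is again regular Noetherian, therefore Cohen-Macaulay, therefore catenary. Consequently all maximal chains between $\fp'$ and $\fq'$ have the common length $\hgt_{B_n[x_1,\ldots,x_n]}(\fq') - \hgt_{B_n[x_1,\ldots,x_n]}(\fp')$, and pulling this back through the correspondence yields the proposition. No serious obstacle is expected; the only point worth verifying carefully is the uniform choice of $n$, which as noted is automatic because every intermediate prime is bounded above by $\fq$.
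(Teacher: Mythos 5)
Your proof is correct and follows essentially the same route as the paper: reduce to $B_n[x_1,\ldots,x_n]$, which is catenary because $B_n$ is a DVR, and transfer chains back and forth via the localization correspondence and Proposition~\ref{prop:localht}. The one cosmetic difference is that you choose $n$ once for $\fq$ and observe that it automatically works for every prime below $\fq$, whereas the paper chooses $n$ to accommodate the finitely many primes appearing in the two given chains; both are valid, and your version has the mild bonus of giving a bijection on \emph{all} intermediate primes rather than just the ones in the two chains.
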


\begin{proof}
Let $\fp=\fa_0 \subset \cdots \subset \fa_r=\fq$ and $\fp=\fb_0 \subset \cdots \subset \fb_s=\fq$ be two maximal chains. Note that $r$ and $s$ are finite since they are bounded by $\hgt_{\cR^+}(\fq)$, which is finite. Let $n$ be sufficiently large so that the $\fa_i$ and $\fb_j$ are all contracted from $B_n[x_1, \ldots, x_n]$. The extensions of these two chains are both maximal chains between the extensions of $\fp$ and $\fq$ in $B_n[x_1, \ldots, x_n]$. They thus have the same length since $B_n[x_1, \ldots, x_n]$ is a catenary ring. (Any DVR is universally catenary, see \stacks{00NM}.)
\end{proof}

\begin{corollary} \label{cor:cat}
Let $\fp$ be a prime of $\cR^+$ of finite height. Then any maximal chain $\fp_0 \subset \cdots \subset \fp_r =\fp$ has length $r=\hgt_{\cR^+}(\fp)$.
\end{corollary}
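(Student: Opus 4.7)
The plan is to deduce this from Proposition~\ref{prop:cat} by pinning down where every maximal chain must start. First I would note that $\cR^+$ is an integral domain: it is a subring of $\cR$, and $\cR$ is a polynomial ring over $K$, hence a domain. So the zero ideal is the unique minimal prime of $\cR^+$, and consequently any maximal chain $\fp_0 \subset \cdots \subset \fp_r = \fp$ must have $\fp_0 = (0)$ --- otherwise one could prepend $(0) \subsetneq \fp_0$ to obtain a strictly longer chain, contradicting maximality.

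Next I would apply Proposition~\ref{prop:cat} to the fixed endpoints $(0)$ and $\fp$: it yields a common length $\ell$ for every maximal chain between these two primes. Combined with the previous step, every maximal chain ending at $\fp$ has length $\ell$.

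Finally I would identify $\ell$ with $\hgt_{\cR^+}(\fp)$. Since the height is finite, it is realized by some strict chain $\fq_0 \subsetneq \cdots \subsetneq \fq_c = \fp$ with $c = \hgt_{\cR^+}(\fp)$, and this height-realizing chain admits no refinement (any refinement would produce a strict chain of length exceeding the height), so it is itself a maximal chain ending at $\fp$. Hence $c = \ell$, giving $r = \hgt_{\cR^+}(\fp)$. There is no real obstacle here: the domain property of $\cR^+$ forces every maximal chain to descend all the way to $(0)$, at which point Proposition~\ref{prop:cat} does all the work.
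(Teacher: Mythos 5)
Your proof is correct and is exactly the intended deduction: the paper leaves the corollary's proof implicit, and the argument is precisely to note that $\cR^+$ is a domain (so every maximal chain ending at $\fp$ begins at $(0)$), apply Proposition~\ref{prop:cat} to the pair $(0) \subset \fp$, and observe that a height-realizing chain is itself maximal. Nothing to add.
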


\begin{proposition} \label{prop:htmodt}
Let $I$ be an ideal of $\cR^+$ of finite height that contains $t$, and let $\ol{I}=I \cR^0$. Then
\begin{displaymath}
\hgt_{\cR^+}(I)=\hgt_{\cR^0}(\ol{I})+1.
\end{displaymath}
\end{proposition}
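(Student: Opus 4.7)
The plan is to reduce to a key identity at the level of primes: for any finite-height prime $\fq$ of $\cR^+$ containing $t$, one has $\hgt_{\cR^+}(\fq) = \hgt_{\cR^0}(\ol{\fq}) + 1$, where $\ol{\fq} = \fq/t\cR^+$. Since $t \in I$, primes of $\cR^+$ containing $I$ automatically contain $t$ and correspond bijectively via the quotient map with primes of $\cR^0$ containing $\ol{I}$. Once one verifies that finite-heightedness is preserved in both directions under this correspondence (which will fall out of the reduction below), taking minima on each side will convert the prime-level identity into the desired ideal-level identity $\hgt_{\cR^+}(I) = \hgt_{\cR^0}(\ol{I}) + 1$.

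To prove the identity, the plan is to transport $\fq$ into the Noetherian ring $B_n[x_1,\ldots,x_n]$. By the corollary following Proposition~\ref{prop:zerocontract2}, for all sufficiently large $n$ the prime $\fq$ is the contraction of a prime $\fq'$ of $B_n[x_1,\ldots,x_n]$, and Proposition~\ref{prop:localht} gives
\begin{displaymath}
\hgt_{B_n[x_1,\ldots,x_n]}(\fq') = \hgt_{\cR^+}(\fq).
\end{displaymath}
Since $B_n$ is a DVR (Proposition~\ref{prop:B}), $B_n[x_1,\ldots,x_n]$ is a Noetherian catenary domain, and the quotient $B_n[x_1,\ldots,x_n]/(t) = \Frac(\cR^0_{>n})[x_1,\ldots,x_n]$ is itself a domain, so $(t)$ is a height-one prime of $B_n[x_1,\ldots,x_n]$. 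The catenary property (together with Krull's Hauptidealsatz forcing $\hgt((t))=1$) then gives $\hgt(\fq'/(t)) = \hgt(\fq') - 1$.

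To close the loop the plan is to identify $\fq'/(t)$ with an extension of $\ol{\fq}$: note that $B_n[x_1,\ldots,x_n]/(t) = \Frac(\cR^0_{>n})[x_1,\ldots,x_n]$ is the localization of $\cR^0 = \cR^0_{>n}[x_1,\ldots,x_n]$ at $\cR^0_{>n} \setminus \{0\}$, and $\fq'/(t)$ is the extension of $\ol{\fq}$ under this localization. Applying Proposition~\ref{prop:zerocontract} to the finite-height ideal $\ol{\fq} \subset \cR^0$ gives $\ol{\fq} \cap \cR^0_{>n} = 0$ for $n$ large, so the extension is a proper prime, and Proposition~\ref{prop:localht} yields $\hgt(\fq'/(t)) = \hgt_{\cR^0}(\ol{\fq})$. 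Combining the two height equalities gives the identity. Running the same reductions starting from a finite-height prime $\ol{\fq} \subset \cR^0$ shows that its preimage in $\cR^+$ automatically has finite height, completing the two-sided bijection required above.

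The main obstacle is bookkeeping rather than any single hard step: the argument shuttles a prime through four rings ($\cR^+$, $B_n[x_1,\ldots,x_n]$, $B_n[x_1,\ldots,x_n]/(t)$, and $\cR^0$) and combines the escape-to-finitely-many-variables of Propositions~\ref{prop:zerocontract} and~\ref{prop:zerocontract2}, the height-preservation under localization of Proposition~\ref{prop:localht}, and the classical catenary property of finite-variable polynomial rings over a DVR. Once the hypotheses line up for a suitable $n$, the drop in height is a routine Noetherian computation inside $B_n[x_1,\ldots,x_n]$.
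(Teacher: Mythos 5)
Your argument is correct, and it reduces to the prime case and then minimizes over primes exactly as the paper does; the difference is in how the prime case is executed. The paper first proves a standalone catenary statement for $\cR^+$ (Proposition~\ref{prop:cat} and Corollary~\ref{cor:cat}) and then simply lifts a maximal chain $0=\ol{\fp}_0\subset\cdots\subset\ol{\fp}_c=\ol{\fp}$ from $\cR^0$ to the chain $0\subset(t)\subset\fp_2\subset\cdots\subset\fp_{c+1}=\fp$ in $\cR^+$, so that Corollary~\ref{cor:cat} immediately gives $\hgt_{\cR^+}(\fp)=c+1$. You instead push the prime all the way into $B_n[x_1,\ldots,x_n]$, use that $(t)$ is a height-one prime there, and identify $B_n[x_1,\ldots,x_n]/(t)$ with the localization of $\cR^0$ at $\cR^0_{>n}\setminus\{0\}$ so that Proposition~\ref{prop:localht} converts $\hgt(\fq'/(t))$ back into $\hgt_{\cR^0}(\ol{\fq})$. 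This is a clean alternative that avoids invoking Corollary~\ref{cor:cat}, at the cost of the extra bookkeeping you acknowledge; it also has the advantage of making the finite-height claim for the preimage of a finite-height prime of $\cR^0$ (needed in the ``$c\le d+1$'' direction, and left implicit in the paper) completely explicit. Two small points of care: the step $\hgt(\fq'/(t))=\hgt(\fq')-1$ does not follow from catenarity of an arbitrary ring alone --- you should say either that $B_n[x_1,\ldots,x_n]$ is regular (hence Cohen--Macaulay), or that it is a catenary \emph{domain} in which $(0)\subset(t)$ is saturated, so that concatenating chains through $(t)$ gives the equality; and the appeal to Proposition~\ref{prop:zerocontract} to see that the extension of $\ol{\fq}$ is proper is redundant, since $\ol{\fq}$ is by construction the contraction of the prime $\fq'/(t)$ of the localization.
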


\begin{proof}
First suppose that $I=\fp$ is prime. Let $c=\hgt_{\cR^0}(\ol{\fp})$, and let $0=\ol{\fp}_0 \subset \cdots \subset \ol{\fp}_c=\ol{\fp}$ be a maximal chain of primes. Let $\fp_{i+1}$ be the inverse image of $\fp_i$ in $\cR^+$; note that $\fp_1=(t)$. Put $\fp_0=0$. Then $\fp_0 \subset \cdots \subset \fp_{c+1} =\fp$ is a maximal chain of primes in $\cR^+$, and so $\hgt_{\cR^+}(\fp)=c+1$ by Corollary~\ref{cor:cat}.

Now let $I$ be an arbitrary ideal containing $t$ of height $c<\infty$, and let $d=\hgt_{\cR^0}(\ol{I})$. Let $\fp$ be a height $c$ prime of $\cR^+$ containing $I$. Then $\fp$ contains $t$, and so $\hgt_{\cR^0}(\ol{\fp})=c-1$. Since $\ol{I} \subset \ol{\fp}$, we find $d \le c-1$. Conversely, suppose that $\ol{\fp}$ is a height $d$ of $\cR^0$ containing $\ol{I}$. Then its inverse image $\fp$ has height $d+1$ and contains $I$, and so $c \le d+1$. This completes the proof.
\end{proof}

\begin{proposition}
The ring $\cR^+$ satisfies condition $(\ast)$ of Proposition~\ref{prop:supht}.
\end{proposition}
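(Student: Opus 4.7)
\medskip

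\noindent\textbf{Proof proposal.} Let $I=(f_1,\ldots,f_r)$ be a finitely generated ideal of $\cR^+$ of finite height $c$. The plan is to partition the set of height-$c$ primes $\fp\supset I$ according to whether $t\in\fp$, and to reduce each piece to the corresponding statement inside one of the polynomial rings $\cR^0=\cR^+/t\cR^+$ or $\cR^{\flat}=\cR^+[1/t]$. Both of these satisfy condition $(\ast)$ by Proposition~\ref{prop:polystar}, so once the reduction is in place the result follows.

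First I would handle primes $\fp$ of height $c$ above $I$ with $t\in\fp$. Passing to the quotient, each such $\fp$ corresponds to a prime $\ol{\fp}=\fp/t\cR^+$ of $\cR^0$ which, by Proposition~\ref{prop:htmodt}, has height $c-1$ and contains the finitely generated ideal $I\cR^0$. If this case is nonempty, then $I+(t)$ has height exactly $c$ in $\cR^+$ (it contains $I$ and is contained in a height-$c$ prime), so Proposition~\ref{prop:htmodt} applied to $I+(t)$ gives $\hgt_{\cR^0}(I\cR^0)=c-1$. Applying $(\ast)$ in the polynomial ring $\cR^0$ then produces only finitely many height-$(c-1)$ primes above $I\cR^0$, hence only finitely many $\fp$ of this type.

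Next I would treat primes $\fp$ of height $c$ above $I$ with $t\notin\fp$. Each such $\fp$ extends to a prime $\fq=\fp\cR^{\flat}$ of $\cR^{\flat}$ of height $c$ by Proposition~\ref{prop:localht}, and $\fq$ contains the finitely generated ideal $I\cR^{\flat}$. If this case is nonempty, then $\hgt_{\cR^{\flat}}(I\cR^{\flat})=c$: the existence of $\fq$ bounds it above by $c$, and a prime of smaller height above $I\cR^{\flat}$ would contract via Proposition~\ref{prop:localht} to a prime of $\cR^+$ containing $I$ of height strictly less than $c$, contradicting $\hgt_{\cR^+}(I)=c$. Then $(\ast)$ for $\cR^{\flat}$ yields finiteness of the $\fq$'s, hence of the $\fp$'s. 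Combining the two cases gives the claim.

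I do not anticipate a serious obstacle here, since the work has already been done in Propositions~\ref{prop:htmodt} and~\ref{prop:localht} together with the $(\ast)$-property for polynomial rings. The only subtle point is verifying in each case that the image ideal $I\cR^0$ or $I\cR^{\flat}$ has precisely the right height for Proposition~\ref{prop:polystar} to apply; this amounts to a direct argument by contradiction using the two height-comparison propositions, as sketched above.
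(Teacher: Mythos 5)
Your proof is correct and follows essentially the same route as the paper: the same partition of the height-$c$ primes above $I$ according to whether they contain $t$, the same reduction of the two pieces to $(\ast)$ for the polynomial rings $\cR^0$ and $\cR^{\flat}$ via Propositions~\ref{prop:htmodt} and~\ref{prop:localht}, and the same verification that $I\cR^0$ and $I\cR^{\flat}$ have the right heights. No gaps.
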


\begin{proof}
Let $I$ be an ideal of $\cR^+$ of height $c<\infty$. Let $S$ be the set of primes of $\cR^+$ of height $c$ that contain $I$. We must show that $S$ is finite. Let $S_1$ be the set of $\fp \in S$ such that $t \not\in \fp$, and let $S_2$ be the complement. We show that $S_1$ and $S_2$ are each finite.

Suppose $S_1$ is non-empty. We first claim that $\hgt_{\cR^{\flat}}(I\cR^{\flat})=c$. We have $c \le \hgt_{\cR^{\flat}}(I \cR^{\flat})$ by Corollary~\ref{cor:localht}. For $\fp \in S_1$ we have $I \cR^{\flat} \subset \fp \cR^{\flat}$ and $\fp \cR^{\flat}$ has height $c$ by Proposition~\ref{prop:localht}. Thus $\hgt_{\cR^{\flat}}(I \cR^{\flat}) \le c$, which proves the claim. The same reasoning shows that $S_1$ is in bijection with the set of height $c$ primes of $\cR^{\flat}$ containing $I \cR^{\flat}$. Since condition $(\ast)$ holds for $\cR^{\flat}$ (Proposition~\ref{prop:polystar}), it follows that $S_1$ is finite.

Suppose $S_2$ is non-empty. Let $J=I+(t)$. Then $I \subset J \subset \fp$ for any $\fp \in S_2$. Since $I$ and $\fp$ have height $c$, it follows that $J$ has height $c$. Let $\ol{J}=J \cR^0$, which has height $c-1$ by the Proposition~\ref{prop:htmodt}. By Proposition~\ref{prop:htmodt}, we see that $S_2$ is in bijection with the height $c-1$ primes of $\cR^0$ containing $\ol{J}$. Since $(\ast)$ holds for $\cR^0$ (Proposition~\ref{prop:polystar}), it follows that $S_2$ is finite.
\end{proof}

\begin{corollary} \label{cor:ht-lim-Rplus}
Let $I$ be an ideal of $\cR^+$ and let $I=\bigcup_{\alpha \in \cI} J_{\alpha}$ be a directed union. Then $\hgt_{\cR^+}(I)=\sup_{\alpha \in \cI} \hgt_{\cR^+}(J_{\alpha})$.
\end{corollary}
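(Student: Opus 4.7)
The plan is a one-line invocation of Proposition~\ref{prop:supht}. That proposition is stated for an arbitrary ring $R$ satisfying the finiteness condition $(\ast)$ on minimal primes of finite-height finitely generated ideals, and it asserts exactly the conclusion we want: height commutes with directed unions. The hypothesis of that proposition for the ring $R = \cR^+$ is precisely what the immediately preceding proposition has just established. So I would simply apply Proposition~\ref{prop:supht} to $R = \cR^+$.

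The only tiny caveat worth noting is that Proposition~\ref{prop:supht} was formulated axiomatically (not only for polynomial rings over a field), so nothing in its statement or proof needs to be revisited for the non-polynomial ring $\cR^+$. All the genuine work has been done in the preceding proposition, where the set of height $c$ primes above a finite height ideal was split into the primes not containing $t$ (transferred to $\cR^{\flat}$ via Proposition~\ref{prop:localht}) and those containing $t$ (transferred to $\cR^0$ via Proposition~\ref{prop:htmodt}), each case being handled by Proposition~\ref{prop:polystar} applied to a genuine polynomial ring over a field. There is therefore no real obstacle to overcome in proving the corollary itself; it is a purely formal consequence.
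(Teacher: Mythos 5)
Your proposal is correct and is exactly the paper's proof: the paper also deduces the corollary by a one-line application of Proposition~\ref{prop:supht}, with condition $(\ast)$ for $\cR^+$ supplied by the immediately preceding proposition. Nothing further is needed.
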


\begin{proof}
This follows from Proposition~\ref{prop:supht}.
\end{proof}

\begin{proposition}
Let $I$ be a finitely generated non-unital ideal of $\cR^+$. Then $I$ has finite height.
\end{proposition}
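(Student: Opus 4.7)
The plan is to split into two cases based on whether some power of $t$ lies in $I$, equivalently whether $I\cR^{\flat} = \cR^{\flat}$. In the first case, $I\cR^{\flat}$ is a finitely generated non-unital ideal of the polynomial $K$-algebra $\cR^{\flat}$, so Proposition~\ref{prop:finhtgen} gives $\hgt_{\cR^{\flat}}(I\cR^{\flat}) < \infty$, and Corollary~\ref{cor:localht} (applied to the multiplicative set $\{t^n\}_{n \ge 0}$) yields $\hgt_{\cR^+}(I) \le \hgt_{\cR^{\flat}}(I\cR^{\flat}) < \infty$.

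Now suppose $t^N \in I$ for some $N \ge 1$. Let $\ol{I}$ be the image of $I$ under the reduction $\pi \colon \cR^+ \to \cR^+/(t) \cong \cR^0$; it is finitely generated. I would first show $\ol{I}$ is non-unital in $\cR^0$: if not, there exist $h_i, k \in \cR^+$ with $\sum h_i f_i = 1 - tk \in I$, and combined with $t^N \in I$ we see that in $\cR^+/I$ we have $1 = tk$, hence $1 = (tk)^N = t^N k^N = 0$, contradicting $1 \notin I$. Then Proposition~\ref{prop:finhtgen}, applied to the polynomial $\bC$-algebra $\cR^0$, gives $d := \hgt_{\cR^0}(\ol{I}) < \infty$. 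Choose a prime $\ol{\fp} \subset \cR^0$ of height $d$ containing $\ol{I}$; by Proposition~\ref{prop:primefg} it is finitely generated, and its generators may be taken to lie in $\bC[x_1, \ldots, x_m]$ for some $m$. Let $\fp = \pi^{-1}(\ol{\fp})$, a prime of $\cR^+$ containing $I + (t) \supset I$.

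The main obstacle is to show $\hgt_{\cR^+}(\fp) < \infty$, since Proposition~\ref{prop:htmodt} and Corollary~\ref{cor:cat} presuppose finite height and so cannot be used to establish it. My approach is to localize just enough to land in a Noetherian ring while keeping $\fp$ alive. Let $S = \cR^+_{>m} \setminus (t)\cR^+_{>m}$, which is multiplicatively closed because $(t)$ is prime in the domain $\cR^+_{>m}$. The localization is $S^{-1}\cR^+ = B_m[x_1, \ldots, x_m]$, which is Noetherian since $B_m$ is a DVR (Proposition~\ref{prop:B}) and we have adjoined only finitely many variables. I would verify $\fp \cap S = \emptyset$: if $f \in \fp \cap \cR^+_{>m}$, then $\pi(f) \in \ol{\fp} \cap \cR^0_{>m}$; since $\ol{\fp}$ is extended from $\bC[x_1, \ldots, x_m]$, the tensor decomposition $\cR^0 = \bC[x_1, \ldots, x_m] \otimes_{\bC} \cR^0_{>m}$ forces this intersection to vanish, so $f \in \ker \pi \cap \cR^+_{>m} = (t)\cR^+_{>m}$, whence $f \notin S$. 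Consequently $\fp$ extends to a prime $\fq$ of the Noetherian ring $B_m[x_1, \ldots, x_m]$, which automatically has finite height, and Proposition~\ref{prop:localht} yields $\hgt_{\cR^+}(\fp) = \hgt_{B_m[x_1, \ldots, x_m]}(\fq) < \infty$. Therefore $\hgt_{\cR^+}(I) \le \hgt_{\cR^+}(\fp) < \infty$.
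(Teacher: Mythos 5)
Your overall architecture is sound, and your handling of the case $t^N \in I$ is genuinely different from the paper's and in one respect more careful. The paper splits on whether $I+(t)$ is the unit ideal and, in the non-unital case, applies Proposition~\ref{prop:htmodt} to $J=I+(t)$ --- even though the hypothesis of that proposition is that the ideal already has finite height, which is exactly what is being proved. Your device of localizing at $S=\cR^+_{>m}\setminus t\cR^+_{>m}$ so as to land in the noetherian ring $B_m[x_1,\ldots,x_m]$, and then exhibiting $\fp$ as the contraction of a prime of that ring, certifies the finiteness of $\hgt_{\cR^+}(\fp)$ directly and sidesteps this circularity. Your first case (no power of $t$ lies in $I$) is a mild repackaging of the paper's second case, using Corollary~\ref{cor:localht} directly rather than contracting a prime; both are fine, and you correctly cite Proposition~\ref{prop:finhtgen} where the paper has a misprinted reference.

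There is, however, one false step. You assert that the finite-height prime $\ol{\fp}\subset\cR^0$ has generators lying in $\bC[x_1,\ldots,x_m]$ for some $m$. Proposition~\ref{prop:primefg} does give finitely many generators, but the ``finitely many variables'' each generator involves are the abstract polynomial variables of $\cR^0$ (which is only \emph{abstractly} a polynomial ring), not the $x_i$: a single element of $\cR^0$ may involve infinitely many $x_i$. For instance, $f=\sum_{i\ge 1}x_i^2$ is irreducible (it has infinite strength, while a reducible quadric is a product of two linear forms), so it generates a height-one prime of the UFD $\cR^0$, and this prime is not extended from any $\bC[x_1,\ldots,x_m]$. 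Fortunately, the only consequence your argument uses is that $\ol{\fp}\cap\cR^0_{>m}=0$ for $m\gg 0$, and that is precisely Proposition~\ref{prop:zerocontract} applied to $\cR^0$ (proved there by a change-of-variables argument, not by locating generators among finitely many $x_i$). Replace your justification with a citation of that proposition and choose $m$ accordingly; the rest of your localization argument --- the multiplicative closedness of $S$, the identification $S^{-1}\cR^+=B_m[x_1,\ldots,x_m]$, the verification $\fp\cap S=\emptyset$, and the appeal to Proposition~\ref{prop:localht} --- then goes through.
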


\begin{proof}
First suppose that $J=I+(t)$ is not the unit ideal. It is finitely generated, and so $\ol{J}=J \cR^0$ is finitely generated. By Proposition~\ref{prop:htmodt}, we have $\hgt_{\cR^+}(J)=\hgt_{\cR^0}(\ol{J})+1$. Since $\ol{J}$ is a finitely generated ideal in the polynomial ring $\cR^0$, it has finite height (Proposition~\ref{prop:finhtgen}). Thus $J$ has finite height, and so $I$ does as well.

Now suppose that $I+(t)$ is the unit ideal. Then $t$ is a unit in $\cR^+/I$, and so $I$ does not contain any power of $t$. It follows that $I\cR^{\flat}$ is a finitely generated non-unital ideal. It is thus finite height (Proposition~\ref{prop:htmodt}), and so is contained in a finite height prime $\fp$. Thus $I$ is contained in the contraction of $\fp$, which has finite height (Proposition~\ref{prop:localht}), and so $I$ has finite height.
\end{proof}

\begin{proposition}[Hauptidealsatz] \label{prop:hauptideal}
Let $I$ be an ideal of $\cR^+$ of finite height $c$ and let $f \in \cR^+$. Suppose that $I+(f)$ is not the unit ideal. Then $I+(f)$ has height $\le c+1$.
\end{proposition}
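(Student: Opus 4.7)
The plan is to reduce the statement to the classical Krull Hauptidealsatz in the Noetherian Cohen--Macaulay rings $B_n[x_1,\ldots,x_n]$, using the correspondence between finite-height primes of $\cR^+$ and primes of these localizations that was set up earlier in this section.

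First I would reduce to the case that $I$ itself is finitely generated. Writing $I$ as a directed union $\bigcup_J J$ of its finitely generated subideals yields $I+(f) = \bigcup_J (J+(f))$ as a directed union, and Corollary~\ref{cor:ht-lim-Rplus} gives $\hgt(I+(f)) = \sup_J \hgt(J+(f))$. Since $\hgt(J) \le c$ for every such $J$, it suffices to prove the sharper assertion $\hgt(J+(f)) \le \hgt(J)+1$ whenever $J$ is finitely generated, of finite height, with $J+(f) \ne \cR^+$.

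For such a $J$, the preceding proposition ensures $\hgt(J+(f))$ is finite. Pick primes $\fp \supseteq J+(f)$ and $\fq \supseteq J$ of $\cR^+$ realizing $\hgt(\fp)=\hgt(J+(f))$ and $\hgt(\fq)=\hgt(J)$. By the corollary following Proposition~\ref{prop:zerocontract2}, for $n$ sufficiently large both $\fp$ and $\fq$ are contracted from primes $\fp',\fq'$ of $B_n[x_1,\ldots,x_n]$, with heights preserved by Proposition~\ref{prop:localht}. Combined with Corollary~\ref{cor:localht}, this yields
\[
\hgt\bigl(J B_n[x_1,\ldots,x_n]\bigr) = \hgt(J), \qquad \hgt\bigl((J+(f))B_n[x_1,\ldots,x_n]\bigr) = \hgt(J+(f)).
\]

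Since $B_n$ is a DVR by Proposition~\ref{prop:B}, the ring $B_n[x_1,\ldots,x_n]$ is Noetherian and Cohen--Macaulay. The classical Krull Hauptidealsatz (in the stronger form $\hgt(K+(g)) \le \hgt(K)+1$, valid in Cohen--Macaulay rings via the catenary/dimension formula) applied to $K = JB_n[x_1,\ldots,x_n]$ gives $\hgt((J+(f))B_n[x_1,\ldots,x_n]) \le \hgt(JB_n[x_1,\ldots,x_n]) + 1$, and substituting the equalities above yields $\hgt(J+(f)) \le \hgt(J)+1 \le c+1$, as desired. The main obstacle is ensuring that the height of the extended ideal in $B_n[x_1,\ldots,x_n]$ matches the height of the original ideal in $\cR^+$; this rests on the prime correspondence from Proposition~\ref{prop:zerocontract2} together with the care needed to choose a single $n$ large enough to accommodate the height-witnessing primes $\fp$ and $\fq$ simultaneously.
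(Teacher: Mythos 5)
Your proof is correct and follows essentially the same route as the paper's: reduce to finitely generated ideals via directed unions and Corollary~\ref{cor:ht-lim-Rplus}, transfer heights to $B_n[x_1,\ldots,x_n]$ using the prime correspondence behind Proposition~\ref{prop:zerocontract2} together with Proposition~\ref{prop:localht} and Corollary~\ref{cor:localht}, and invoke the classical Hauptidealsatz there. The only differences are cosmetic --- you perform the two reductions in the opposite order, and you record equality of heights for both extended ideals rather than contracting a single prime of height $\le c+1$ back to $\cR^+$ --- and the ideal-theoretic form of the Hauptidealsatz you use in $B_n[x_1,\ldots,x_n]$ is exactly the form the paper relies on.
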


\begin{proof}
First suppose that $I$ is finitely generated. Then $J=I+(f)$ is also finitely generated, and thus has finite height by the previous proposition. Let $\fp$ be a height $c$ prime containing $I$ and let $\fq$ be a finite height prime containing $J$. Let $n$ be such that $\fp$ and $\fq$ are contracted from $B_n[x_1, \ldots, x_n]$. The extension $I'$ of $I$ to $B_n[x_1, \ldots, x_n]$ has height $c$: indeed, it has height at least $c$ (Corollary~\ref{cor:localht}) and is contained in the extension of $\fp$, which has height $c$ (Proposition~\ref{prop:localht}). Furthermore, $I'+(f)$ is not the unit ideal of $B_n[x_1, \ldots, x_n]$, as it is contained in the extension of the prime $\fq$. Thus, by the classical Hauptidealsatz, $I'+(f)$ has height $\le c+1$. Let $\fP$ be a prime of $B_n[x_1, \ldots, x_n]$ of height $\le c+1$ containing $I'+(f)$. Then the contraction of $\fP$ to $\cR^+$ has height $\le c+1$ (Proposition~\ref{prop:localht}) and contains $J=I+(f)$. Thus $\hgt_{\cR^+}(J) \le c+1$.

We now treat the general case. Write $I=\bigcup_{\alpha \in \cI} J_{\alpha}$ (directed union) with $J_{\alpha}$ a finitely generated ideal contained in $I$. By Corollary~\ref{cor:ht-lim-Rplus}, $\hgt_{\cR^+}(J_{\alpha})=c$ for $\alpha$ sufficiently large; we may as well assume it holds for all $\alpha$ by passing to a cofinal subset. We have $I+(f) = \bigcup_{\alpha \in \cI} (J_{\alpha}+(f))$. Thus, applying Corollary~\ref{cor:ht-lim-Rplus} again, we have $\hgt_{\cR^+}(I+(f)) = \sup_{\alpha \in \cI} \hgt_{\cR^+}(J_{\alpha}+(f))$. By the first paragraph, we have $\hgt_{\cR^+}(J_{\alpha}+(f)) \le c+1$ for all $\alpha$. It follows that $\hgt_{\cR^+}(I+(f)) \le c+1$, which completes the proof.
\end{proof}

\end{document}